\documentclass[11pt]{amsart}
\usepackage[T1]{fontenc}
\usepackage{lmodern}
\usepackage{amsmath,amssymb,amsthm,mathtools}
\usepackage[margin=1in]{geometry}
\usepackage[expansion=false]{microtype}
\usepackage{tikz-cd}
\usepackage[colorlinks=true,linkcolor=blue,citecolor=blue,urlcolor=blue]{hyperref}
\hypersetup{pdftitle={Commutators of compact operators}}
\numberwithin{equation}{section}
\newtheorem{theorem}{Theorem}[section]
\newtheorem{proposition}[theorem]{Proposition}
\newtheorem{lemma}[theorem]{Lemma}
\newtheorem{corollary}[theorem]{Corollary}
\theoremstyle{definition}
\newtheorem{definition}[theorem]{Definition}

\DeclareMathOperator{\Tr}{Tr}
\DeclareMathOperator{\ran}{ran}
\DeclareMathOperator{\rank}{rank}
\DeclareMathOperator{\spanop}{span}
\newcommand{\HH}{\mathcal H}
\newcommand{\BB}{\mathcal B}
\newcommand{\KK}{\mathcal K}
\newcommand{\SSp}{\mathcal S}
\newcommand{\C}{\mathbb C}
\newcommand{\N}{\mathbb N}
\newcommand{\ip}[2]{\langle #1,#2\rangle}
\allowdisplaybreaks[1]
\title[]{Every compact operator is a commutator of compact operators}
\author{Zhichao Liu}
\address{School of Mathematical Sciences,
Dalian University of Technology,
Dalian, {\rm 116024}, China }
\email{lzc.12@outlook.com}

\date{}

\begin{document}
\begin{abstract}
We prove that every compact operator $T$ on a separable infinite-dimensional complex Hilbert space is a commutator of two compact operators, thereby answering a long-standing open question of Pearcy and Topping. Moreover, the compact factors $A$ and $B$ can be chosen such that $[A, B] = T$ and $\max\{\Vert{}A\Vert{},\Vert{}B\Vert{}\}\leq c\Vert{}T\Vert{}^{1/2}$ for a universal constant $c$.
\end{abstract}
\maketitle

\section{Introduction}

Let $\HH$ be a separable infinite-dimensional complex Hilbert space,
and let $\BB(\HH)$ and $\KK(\HH)$ denote the bounded and compact
operators on $\HH$, respectively. A \emph{commutator} is an operator
of the form $[A,B]=AB-BA$. The question of which operators arise as commutators depends crucially on the class to which the factors are restricted. Brown and Pearcy \cite{BrownPearcy} characterized
the commutators in $\BB(\HH)$: the only excluded operators are
$\lambda I+K$, where $\lambda\ne0$ and $K$ is compact. In particular,
every compact operator is a commutator of bounded operators.

In 1971, Pearcy and Topping \cite{PT} asked whether the factors can
both be chosen compact. Thus their question is whether
\[
\KK(\HH)=\{[A,B]:A,B\in\KK(\HH)\}.
\]
The structure of the linear spans of commutators in
operator ideals has a separate and extensive theory; see
Dykema, Figiel, Weiss, and Wodzicki \cite{DFWW}. 

An important step was due to Anderson \cite{Anderson}, who showed that a rank-one projection as a commutator of two compact operators.
His work also gives the conclusion for compact operators that vanish on an infinite-dimensional reducing subspace, and hence for all
finite-rank operators. The rank-one example shows why the ordinary trace cannot be used as an obstruction: a trace-class commutator of
compact operators need not have trace zero when its two products are
not individually trace class.

Motivated by Anderson's result, subsequent work sought strictly positive compact operators
that are commutators of compact operators \cite{W}.
Davidson, Marcoux, and Radjavi constructed such examples
in an unpublished manuscript \cite{DMR}.
Belti{\c{t}}{\u{a}}, Patnaik, and Weiss independently obtained
families of strictly positive examples by modifying
Anderson's construction \cite[Theorem~3.1]{BPW};
see also \cite{P}. 
Dykema and Krishnaswamy-Usha
\cite{DKU} proved that every nilpotent element of an operator ideal
is a single commutator in a suitable power of that ideal; in particular,
every nilpotent compact operator is a commutator of compact operators.
More recently, Loreaux, Patnaik, Petrovi\'c, and Weiss \cite{LPPW}
extended the classes accessible through Anderson's method and
identified restrictions on that method. They also use block tridiagonal forms to reformulate the general problem and examine the restrictions of compactness. Inspired by this block-matrix approach, our construction relies on uniform finite-dimensional commutator estimates to bound the norms of the factors and ensure their compactness.

A different method comes from the study of diagonals. Fan
\cite{Fan} characterized operators having a zero-diagonal in an
orthonormal basis. In particular, a trace-class operator of trace
zero has a zero-diagonal. Fan and Fong \cite{FF} proved that every
compact operator outside the trace class is similar to an operator
with zero diagonal. Reducing the operator to finite-dimensional blocks is a natural strategy, and it relies on controlling the matrix commutators as the block size grows. Recently, the bound established in \cite[Theorem~1.1]{SWZ} provides the uniform estimate needed to advance this approach.

In this paper, we give an affirmative answer to the 1971 Pearcy-Topping
question and obtain a uniform estimate for the compact factors.

Throughout, fix a universal constant $\kappa\geq 4$ such that Lemma~\ref{lem:matrix} holds.

\begin{theorem}\label{thm:main}
For every $T\in\KK(\HH)$, there exist $A,B\in\KK(\HH)$ such that
\[
T=[A,B],\qquad
\max\{\|A\|,\|B\|\}\leq 50\sqrt{\kappa\|T\|}.
\]
\end{theorem}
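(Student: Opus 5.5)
We do not see Lemma~\ref{lem:matrix}, but by the introduction's reference to \cite[Theorem~1.1]{SWZ} we take it to be a uniform finite-dimensional commutator estimate. Concretely, we assume it says: every trace-zero $n\times n$ matrix $X$ can be written as $X=[Y,Z]$ with $\|Y\|\,\|Z\|\le\kappa\|X\|$, where $\kappa$ does not depend on $n$. After rescaling, $\|Y\|,\|Z\|\le\sqrt{\kappa\|X\|}$.

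The plan is to write $T$ as a block operator relative to a decomposition $\HH=\bigoplus_k \HH_k$ into finite-dimensional subspaces. We then split $T=D+N$, where $D$ is block diagonal with trace-zero diagonal blocks whose norms tend to $0$, and $N$ is a remainder we can absorb separately. Each diagonal block $D_k=[Y_k,Z_k]$ is handled by Lemma~\ref{lem:matrix}. Then $A_0=\bigoplus Y_k$ and $B_0=\bigoplus Z_k$ satisfy $\|Y_k\|,\|Z_k\|\le\sqrt{\kappa\|D_k\|}\to0$, so $A_0,B_0$ are compact with norm at most $\sqrt{\kappa\sup\|D_k\|}$. This is the source of the factor $\sqrt{\kappa\|T\|}$.

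The key steps are as follows.

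\emph{Step 1: normalize.} Assume $\|T\|=1$ and scale at the end.

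\emph{Step 2: block tridiagonal form.} Choose finite-rank projections $P_n\uparrow I$ with $\|(I-P_n)T\|,\|T(I-P_n)\|\le 2^{-n}$, and take $\HH_k=\ran(P_k-P_{k-1})$. Then $T$ is block upper and lower triangular up to tails. Use a Lanczos/Arveson-style choice, enlarging $P_{n+1}$ to contain $P_n\HH+TP_n\HH+T^*P_n\HH$, so that $T$ is exactly block tridiagonal: $T=\sum_k (T_{k,k-1}+T_{k,k}+T_{k,k+1})$. All blocks have norm at most $1$ and tend to $0$ in norm, since $\|T_{k,j}\|\le\|T(I-P_{k-2})\|$.

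\emph{Step 3: remove the traces of diagonal blocks.} The diagonal blocks $T_{kk}$ need not be traceless. Following the Fan/Fan--Fong diagonal philosophy, we regroup pairs of consecutive blocks and conjugate by unitaries that mix $\HH_k$ with a fresh piece of $\HH_{k+1}$. Alternatively, we transfer trace: write $T_{kk}=(T_{kk}-t_kE_k)+t_kE_k$, with $E_k$ a small-rank normalized piece carrying the trace. We then cancel $\sum t_kE_k$ by an Anderson-type construction, which handles operators with an infinite-dimensional reducing kernel, together with a telescoping shift that moves trace between blocks, writing $t_k(E_k-E_{k+1})$ as traceless $2$-block pieces.

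\emph{Step 4: off-diagonal parts.} A block tridiagonal operator with zero diagonal splits as $L+U$ with $L$ block subdiagonal. Regroup the blocks into pairs $\HH_{2k-1}\oplus\HH_{2k}$, so that $T$ becomes block bidiagonal-plus-diagonal and each $2\times2$ diagonal superblock is traceless after Step 3. The residual strictly off-diagonal pieces are nilpotent-like. Handle them by a second grouping with offset one, or by splitting $T=T_1+T_2$ with each $T_i$ block diagonal for a different grouping. The obstacle is that a sum of two commutators is not a commutator. So instead we use three or four direct summands of $\HH$, with shifts arranged so that the resulting pieces commute with each other's factors; this is where constants such as $50$ come from.

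\emph{Main obstacle.} The hard part is the trace bookkeeping in Step 3 while keeping the blocks small. The traces $t_k=\Tr T_{kk}$ need not be summable, and they need not tend to $0$ relative to the block sizes. The first attempt is to enlarge the block sizes $n_k$ rapidly, so that $|t_k|/n_k\to0$. A traceless correction can then be spread over $\HH_{k+1}$ with small norm, namely the matrix $t_k\,(\text{rank-one})-(t_k/n_{k+1})I$, which is not small in norm. The workaround is to instead push the trace into a diagonal operator of norm at most $\sup|t_k|/n_{k+1}$, and use Anderson's rank-one construction for the leftover. Compactness of $A$ and $B$ follows because each contribution's block norms tend to zero, and the norm bound follows by summing the finitely many pieces.
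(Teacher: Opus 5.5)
There is a genuine gap. Steps 3 and 4 identify the two real difficulties but resolve neither. The plan of splitting $T=D+N$ and treating the pieces separately cannot work: knowing $D=[A_1,B_1]$ and $N=[A_2,B_2]$ gives no single pair for $D+N$, since that $D+N$ is a commutator of compacts is exactly what is to be proved. So ``summing the finitely many pieces'' yields nothing.

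The gluing mechanism you are missing is spectral separation. Once all diagonal blocks $T_{nn}$ are traceless, write $T_{nn}=[U_n,V_n]$ by Lemma~\ref{lem:matrix}. Let $A$ be block diagonal with $A_n=2\delta_nI+U_n$ and $\delta_{n+1}\le\delta_n/4$. The scalar shift does not change $[A_n,V_n]$, but it separates the spectra of distinct $A_n$. Hence every off-diagonal block of a single $B$ solves $A_mZ_{mn}-Z_{mn}A_n=T_{mn}$ (Lemma~\ref{lem:sylvester}), with $\|Z_{mn}\|\le4\min\{\delta_m,\delta_n\}^2/\max\{\delta_m,\delta_n\}$, provided the cutoffs make $\|T_{mn}\|\le\min\{\delta_m,\delta_n\}^2/\kappa$. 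Lemma~\ref{lem:schur} then gives $\|A\|\le3\delta_1$, $\|B\|\le\frac43\delta_1$, and compactness from the tails (Proposition~\ref{prop:zero-diagonal}). Your ``three or four direct summands with shifts'' does not supply this, and the block tridiagonal form is not needed.

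Step 3 also fails as proposed, in three ways.
\begin{itemize}
\item Unitary regrouping cannot make the diagonal blocks traceless in general. For $0\ne T\ge0$, every diagonal block of $U^*TU$ is positive semidefinite, so if all were traceless they would all vanish, forcing $T=0$.
\item A non-unitary similarity is therefore unavoidable, and for a universal constant it must have $\|S\|\,\|S^{-1}\|$ bounded. This is the paper's quantitative Fan--Fong result (Theorem~\ref{thm:bounded-similarity}). It uses $2\times2$ similarities within $2\theta$ of $I$ that mix eigenvectors $x_n$ of $\operatorname{Re}C$ with vectors $y_n$ satisfying $\sum_n(\|Ty_n\|+\|T^*y_n\|)<\infty$, and steers the partial diagonal sums to $0$.
\item For $T\in\SSp_1$ with $\Tr T\ne0$ the obstruction is absolute. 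For any bounded invertible $S$ and any orthogonal finite-dimensional blocking, the diagonal block traces of $S^{-1}TS$ sum to $\Tr T$, so they cannot all vanish. Your telescoping only relocates the cumulative sums $t_1+\cdots+t_k$, which tend to $\Tr T\ne0$ in the trace-class case and to $+\infty$ for positive $T\notin\SSp_1$.
\end{itemize}

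The paper instead puts the entire trace into one finite-rank block $D$ (Lemma~\ref{lem:finite-trace-block}, via an idempotent $R+Q$ of norm at most $2$). It writes $D=[A_0,B_0]$ by Anderson's construction. It then glues this to the zero-diagonal remainder $G$ in Lemma~\ref{lem:joining}, solving $A_0X_n-X_nA_n=E_n$ and $A_nY_n-Y_nA_0=F_n$ with the resolvent bound of Lemma~\ref{lem:resolvent} and choosing $2\delta_n\notin\sigma(A_0)$. Your Anderson ``leftover'' sits on the same blocks as the traceless part, so no such gluing is available to you.
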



In this paper, we distinguish two cases according to whether the compact operator belongs to the trace class. For compact operators outside the trace class, we show that the similarity in the Fan--Fong reduction can be chosen arbitrarily close to the identity. For trace-class operators, we use a similarity
to concentrate the trace in a finite-rank diagonal block.
We then combine Anderson's construction on this block with
the zero-diagonal construction on the remaining summand.
The remainder of the paper is organized as follows. Section~2 introduces notation and establishes preliminary estimates. Section~3 is devoted to the relevant compression and techniques. Finally, Section~4 presents the quantitative  reduction and the proof of Theorem~\ref{thm:main}.

\section{Preliminaries and the zero-diagonal construction}

In this paper, all Hilbert spaces are complex, and inner products are linear in the first variable.

\begin{definition}\label{def:operators}
For Hilbert spaces $\HH_1,\HH_2$, let $\BB(\HH_2,\HH_1)$ and
$\KK(\HH_2,\HH_1)$ denote the bounded and compact operators from
$\HH_2$ to $\HH_1$, respectively. When the two spaces agree, we write
$\BB(\HH)$ and $\KK(\HH)$. Denote by $\|T\|$ the operator
norm of $T$ and by $\sigma(T)$  the spectrum of $T$.
\end{definition}

\begin{definition}
An operator $T\in \BB(\HH)$ has \emph{a zero-diagonal} if there exists an orthonormal basis
$(e_j)$ for $\HH$ such that $\ip{Te_j}{e_j}=0$ for all $j$.
\end{definition}

\begin{definition}\label{def:trace}
An operator $T\in\BB(\HH)$ is \emph{trace-class} if
$\sum_{j=1}^{\infty}\ip{|T|e_j}{e_j}<\infty$ for an orthonormal basis $(e_j)$,
where $|T|=(T^*T)^{1/2}$. The trace class is denoted by
$\SSp_1(\HH)$, and its norm is
$\|T\|_1=\Tr(|T|)$. For $T\in\SSp_1(\HH)$,
\[
\Tr(T)=\sum_{j=1}^{\infty}\ip{Te_j}{e_j}.
\]
The series is absolutely convergent and independent of the basis.
For an operator $T$ on a finite-dimensional space, we write $\Tr(T)$ for the unnormalized trace.
\end{definition}
Note that for any bounded operators $A,B$ and trace-class $T$,
$$
\|ATB\|_1\leq\|A\|\,\|T\|_1\,\|B\|,\quad \Tr(AT)=\Tr(TA).$$
We use the following form of the matrix theorem of Shen, Wang, and Zhi.
Its essential point is that the same constant works in every matrix
dimension. 
The equal bounds follow
from reciprocal rescaling: for nonzero factors, replace $U,V$ by
$cU,c^{-1}V$ with $c=(\|V\|/\|U\|)^{1/2}$.

\begin{lemma}[{\cite[Theorem~1.1]{SWZ}}]
\label{lem:matrix}
There is a constant $\kappa\geq1$ such
that every trace-zero matrix $H$ has a representation
\[
H=[U,V],\qquad
\max\{\|U\|,\|V\|\}\leq\sqrt{\kappa\|H\|}.
\]
\end{lemma}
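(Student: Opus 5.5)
This lemma is the dimension-free commutator estimate of Shen, Wang, and Zhi, so I would not reprove the hard estimate. The plan is to quote \cite[Theorem~1.1]{SWZ} in its multiplicative form and then balance the two factors. That theorem says there is an absolute constant $\kappa\ge1$ such that every trace-zero $H\in M_n(\C)$, for every $n$, can be written as $H=[U_0,V_0]$ with
\[
\|U_0\|\,\|V_0\|\le\kappa\|H\|.
\]
The point we need from it is that $\kappa$ does not depend on $n$.

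First I would dispose of the degenerate cases. If $H=0$, take $U=V=0$. If $H\neq0$, then $[U_0,V_0]=H\neq0$ forces $U_0\neq0$ and $V_0\neq0$.

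In the remaining case, set $c=(\|V_0\|/\|U_0\|)^{1/2}$ and put $U=cU_0$ and $V=c^{-1}V_0$. Bilinearity of the commutator gives $[U,V]=[U_0,V_0]=H$. Moreover,
\[
\|U\|=\|V\|=(\|U_0\|\,\|V_0\|)^{1/2}\le\sqrt{\kappa\|H\|},
\]
which is exactly the stated bound. Enlarging $\kappa$ keeps the inequality true, which justifies the later normalization $\kappa\ge4$ made before Theorem~\ref{thm:main}.

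The only substantive step is the uniformity of $\kappa$ in the dimension, and that is supplied entirely by \cite{SWZ}. The naive approach does not give it. One would conjugate $H$ by a unitary to make its diagonal zero (possible by Fan's theorem, since $\Tr H=0$), take $U=D$ diagonal with distinct entries $d_i$, and solve for $V$ entrywise by $V_{ij}=H_{ij}/(d_i-d_j)$. But then $\|V\|$ is governed by a divided-difference (Schur) multiplier whose norm grows with $n$. Avoiding that growth is precisely the content of the cited theorem, so here I would rely on \cite[Theorem~1.1]{SWZ} rather than attempt a self-contained proof.
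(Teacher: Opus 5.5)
Your proposal is correct and matches the paper's argument. Both take the dimension-free product bound from the theorem of Shen, Wang, and Zhi as given, then equalize the norms by the reciprocal rescaling $U=cU_0$, $V=c^{-1}V_0$ with $c=(\|V_0\|/\|U_0\|)^{1/2}$. Your separate treatment of $H=0$ just makes explicit the paper's phrase ``for nonzero factors.''
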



We shall also use the standard  fact that, if finite-rank
orthogonal projections $P_n$ increase strongly to $I$, then
$\|(I-P_n)K\|\to0$ and $\|K(I-P_n)\|\to0$ for every compact $K$.

The next estimate is an elementary case of Rosenblum's
operator-equation theorem \cite{Rosenblum}; see also \cite[Lemma~2.4]{SWZ}. We retain its short proof.

\begin{lemma}\label{lem:sylvester}
Let $U\in\BB(\HH_1)$, $V\in\BB(\HH_2)$, and $z,w\in\C$ satisfy
$|z-w|>\|U\|+\|V\|$. For every $E\in\BB(\HH_2,\HH_1)$, the equation
\[
(zI+U)X-X(wI+V)=E
\]
has a unique solution $X\in\BB(\HH_2,\HH_1)$, with
\[
\|X\|\leq\frac{\|E\|}{|z-w|-\|U\|-\|V\|}.
\]
\end{lemma}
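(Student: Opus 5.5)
The plan is to write $zI+U-(wI+V)$ as a scalar shift plus a small perturbation, and to invert the resulting operator on $\BB(\HH_2,\HH_1)$ by a Neumann series. Set $\lambda=z-w$, so $|\lambda|>\|U\|+\|V\|\geq0$; in particular $\lambda\neq0$. Define the bounded linear map $\Delta$ on the Banach space $\BB(\HH_2,\HH_1)$ (with the operator norm) by $\Delta(X)=UX-XV$. The equation then reads
\[
\lambda X+\Delta(X)=E,\qquad\text{equivalently}\qquad \bigl(I+\lambda^{-1}\Delta\bigr)(X)=\lambda^{-1}E .
\]

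Next I will bound $\Delta$. For every $X$,
\[
\|\Delta(X)\|\leq\|U\|\,\|X\|+\|X\|\,\|V\|,
\]
so the operator norm of $\Delta$ satisfies $\|\Delta\|\leq\|U\|+\|V\|<|\lambda|$. Hence $\|\lambda^{-1}\Delta\|<1$. Since $\BB(\HH_2,\HH_1)$ is complete, the map $I+\lambda^{-1}\Delta$ is invertible, with inverse given by the convergent Neumann series $\sum_{k\ge0}(-\lambda^{-1}\Delta)^k$. This gives both existence and uniqueness of the solution:
\[
X=\lambda^{-1}\sum_{k\geq0}(-\lambda^{-1}\Delta)^k(E).
\]

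For the norm bound, summing the geometric series gives
\[
\|X\|\leq\frac{|\lambda|^{-1}\|E\|}{1-|\lambda|^{-1}(\|U\|+\|V\|)}=\frac{\|E\|}{|z-w|-\|U\|-\|V\|}.
\]
Alternatively, uniqueness together with the same bound follows directly from the lower estimate
\[
\|\lambda X+\Delta X\|\geq(|\lambda|-\|U\|-\|V\|)\|X\|,
\]
while existence still comes from the Neumann series. I do not expect a genuine obstacle here. The only point requiring care is to check that $\Delta$ is a bounded operator on a Banach space, so that the Neumann series argument applies; this holds because $\BB(\HH_2,\HH_1)$ is complete in the operator norm.
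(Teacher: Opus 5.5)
Your proposal is correct and follows the paper's proof: the paper likewise rewrites the equation as $((z-w)I+\Psi)X=E$ with $\Psi(X)=UX-XV$ on $\BB(\HH_2,\HH_1)$, bounds $\|\Psi\|\leq\|U\|+\|V\|<|z-w|$, and inverts by a Neumann series to get the stated bound. Your alternative lower estimate $\|\lambda X+\Delta X\|\geq(|\lambda|-\|U\|-\|V\|)\|X\|$ is a valid extra way to get uniqueness and the bound, but it adds nothing essential.
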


\begin{proof}
On $\BB(\HH_2,\HH_1)$, the map $\Psi(X)=UX-XV$ has norm at most
$\|U\|+\|V\|<|z-w|$. Hence $(z-w)I+\Psi$ is invertible and
\[
\|((z-w)I+\Psi)^{-1}\|
\leq\frac1{|z-w|}\sum_{j=0}^{\infty}
       \left(\frac{\|\Psi\|}{|z-w|}\right)^j
\leq\frac1{|z-w|-\|U\|-\|V\|}.
\]
Applying this inverse to $E$ proves the assertion.

\end{proof}

We also use a resolvent form of this argument \cite{Rosenblum}. By bounding the resolvent rather than using norm separation (as in Lemma~\ref{lem:sylvester}), the following Neumann series directly provides the required estimates.

\begin{lemma}\label{lem:resolvent}
Let $A_0\in\BB(\HH_0)$, $\delta\notin\sigma(A_0)$, and
$r=\|(A_0-\delta I)^{-1}\|$. Suppose that $U\in\BB(\mathcal E)$
and $r\|U\|\leq1/2$. For
$E\in\BB(\mathcal E,\HH_0)$ and $F\in\BB(\HH_0,\mathcal E)$,
the equations
\[
A_0X-X(\delta I+U)=E,\qquad
(\delta I+U)Y-YA_0=F
\]
have unique solutions satisfying
$\|X\|\leq2r\|E\|$ and $\|Y\|\leq2r\|F\|$.
\end{lemma}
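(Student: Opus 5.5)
The plan is to reduce both equations to Neumann series, in the spirit of the proof of Lemma~\ref{lem:sylvester}, but working with the resolvent of $A_0$ instead of norm separation. Write $R=(A_0-\delta I)^{-1}$, so that $\|R\|=r$. The first equation can be rewritten as
\[
(A_0-\delta I)X - XU = E .
\]
Multiplying on the left by $R$ turns it into the fixed-point equation
\[
X - R X U = R E .
\]
Define $\Phi$ on $\BB(\mathcal E,\HH_0)$ by $\Phi(X)=RXU$. Then $\|\Phi\|\le r\|U\|\le 1/2$, so $I-\Phi$ is invertible with $\|(I-\Phi)^{-1}\|\le \sum_j 2^{-j}=2$. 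Consequently
\[
X=(I-\Phi)^{-1}(RE)
\]
solves the equation and satisfies $\|X\|\le 2\|RE\|\le 2r\|E\|$.

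For uniqueness in the first equation, the manipulation is reversible. If $X$ solves the original equation, then it solves $X-\Phi(X)=RE$, because $R$ is invertible. Left multiplication by $A_0-\delta I$ recovers the original equation, so the two equations have exactly the same solutions. Uniqueness then follows from the invertibility of $I-\Phi$.

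The second equation is handled in the same way, with the multiplication by $R$ placed on the right. Rewrite it as
\[
UY - Y(A_0-\delta I) = F,
\]
and multiply on the right by $-R$ to obtain
\[
Y - U Y R = -F R .
\]
The map $\Psi(Y)=UYR$ on $\BB(\HH_0,\mathcal E)$ again has norm at most $1/2$. Hence
\[
Y=-(I-\Psi)^{-1}(FR)
\]
is the unique solution, and $\|Y\|\le 2r\|F\|$.

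No step is a real obstacle. The only point needing care is that multiplication by the invertible $R$ (on the left for $X$, on the right for $Y$) gives an equivalent equation, so that existence and uniqueness transfer back to the original equations. The hypothesis $r\|U\|\le 1/2$ is exactly what makes each Neumann series converge with sum at most $2$. This yields the constant $2r$ in both bounds.
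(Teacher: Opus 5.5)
Your proof is correct and follows the paper's argument. The paper likewise multiplies by $W=(A_0-\delta I)^{-1}$ on the left (for $X$) or on the right (for $Y$) to obtain $X=WE+WXU$ and $Y=UYW-FW$, and your $(I-\Phi)^{-1}(RE)$ and $-(I-\Psi)^{-1}(FR)$ expand to exactly its series $\sum_j W^{j+1}EU^j$ and $-\sum_j U^jFW^{j+1}$. Your observation that multiplying by the invertible $R$ gives an equivalent equation, so that uniqueness follows from injectivity of $I-\Phi$ and $I-\Psi$, makes explicit a point the paper only asserts.
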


\begin{proof}
Let $W=(A_0-\delta I)^{-1}$. Multiplying the first equation on the left by $W$ and the second on the right by $W$ yields
$$X=WE+WXU,\quad Y=UYW-FW.$$
Define
\[
X=\sum_{j=0}^{\infty}W^{j+1}EU^j,
\qquad
Y=-\sum_{j=0}^{\infty}U^jFW^{j+1}.
\]
The estimates
\[
\|W^{j+1}EU^j\|\leq r\|E\|\,2^{-j},\qquad
\|U^jFW^{j+1}\|\leq r\|F\|\,2^{-j}
\]
show that both series converge absolutely in the operator norm. Direct substitution into the equations confirms that their sums are indeed the desired unique solutions. 
Now the geometric bounds yield
\[
\|X\|\leq\frac{r\|E\|}{1-r\|U\|}\leq2r\|E\|,
\qquad
\|Y\|\leq\frac{r\|F\|}{1-r\|U\|}\leq2r\|F\|.
\]

\end{proof}

The following block form  will control the
operators obtained from these equations. 

\begin{lemma}\label{lem:schur}
Let $(\mathcal E_n)_{n\geq1}$ be Hilbert spaces, let
$\mathcal E=\bigoplus_{n\geq1}\mathcal E_n$, and suppose that
$Z_{mn}\in\BB(\mathcal E_n,\mathcal E_m)$ for all $m,n\geq1$. Let $c>0$.
If
\[
\sup_{m}\sum_{n=1}^{\infty}\|Z_{mn}\|\leq c,
\qquad
\sup_{n}\sum_{m=1}^{\infty}\|Z_{mn}\|\leq c,
\]
then the block matrix
\[
Z=\begin{pmatrix}
Z_{11}&Z_{12}&Z_{13}&\cdots\\
Z_{21}&Z_{22}&Z_{23}&\cdots\\
Z_{31}&Z_{32}&Z_{33}&\cdots\\
\vdots&\vdots&\vdots&\ddots
\end{pmatrix}
\begin{array}{l}\mathcal E_1\\\mathcal E_2\\\mathcal E_3\\\vdots\end{array}
\]
defines a bounded operator on $\mathcal E$, and $\|Z\|\leq c$.
\end{lemma}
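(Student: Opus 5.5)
The plan is to prove the estimate by Schur's test: reduce the operator-norm bound for $Z$ to Cauchy--Schwarz applied to the block norms. Take $x=(x_n)_{n\geq1}\in\mathcal E$ with $x_n\in\mathcal E_n$ and $\sum_n\|x_n\|^2=\|x\|^2<\infty$.

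The first step is to show that the $m$-th component of $Zx$, namely $(Zx)_m=\sum_n Z_{mn}x_n$, converges absolutely in $\mathcal E_m$. This follows from
\[
\sum_n\|Z_{mn}x_n\|\leq\sum_n\|Z_{mn}\|\,\|x_n\|\leq\Bigl(\sum_n\|Z_{mn}\|\Bigr)^{1/2}\Bigl(\sum_n\|Z_{mn}\|\,\|x_n\|^2\Bigr)^{1/2}.
\]
Here we split $\|Z_{mn}\|\,\|x_n\|=\|Z_{mn}\|^{1/2}\cdot\|Z_{mn}\|^{1/2}\|x_n\|$ and apply Cauchy--Schwarz. The row hypothesis then gives
\[
\|(Zx)_m\|^2\leq c\sum_n\|Z_{mn}\|\,\|x_n\|^2 .
\]

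The second step is to sum this bound over $m$ and interchange the order of summation, which is legitimate by Tonelli since all terms are nonnegative. The column hypothesis gives
\[
\sum_m\|(Zx)_m\|^2\leq c\sum_n\Bigl(\sum_m\|Z_{mn}\|\Bigr)\|x_n\|^2\leq c^2\|x\|^2 .
\]
Consequently $Zx\in\mathcal E$, and $Zx$ is well defined componentwise.

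Finally, $x\mapsto Zx$ is clearly linear, so $Z$ is a bounded operator with $\|Z\|\leq c$. This agrees with the action of the block matrix on finitely supported vectors, so it is the operator the block matrix defines.

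No step here is a real obstacle. The only point needing care is the choice of the weighted Cauchy--Schwarz split $\|Z_{mn}\|^{1/2}\cdot\|Z_{mn}\|^{1/2}\|x_n\|$. With that split, the row sums and the column sums each contribute one factor of $c$.
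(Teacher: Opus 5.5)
Your proof is correct and is the same Schur-test argument as the paper's. Both use the weighted split $\|Z_{mn}\|\,\|x_n\|=\|Z_{mn}\|^{1/2}\cdot\|Z_{mn}\|^{1/2}\|x_n\|$, Cauchy--Schwarz with the row bound, and then the column bound after summing over $m$. The only difference is cosmetic: you work directly with arbitrary $x\in\mathcal E$ and verify absolute convergence of each $\sum_n Z_{mn}x_n$, whereas the paper estimates on finitely supported vectors and extends by continuity.
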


\begin{proof}
For a finitely supported vector $x=(x_n)$, by the Cauchy--Schwarz inequality,
\begin{align*}
\sum_{m=1}^{\infty}\left\|\sum_{n=1}^{\infty}Z_{mn}x_n\right\|^2
&\leq\sum_{m=1}^{\infty}\left(\sum_{n=1}^{\infty}\|Z_{mn}\|\right)
                 \left(\sum_{n=1}^{\infty}\|Z_{mn}\|\,\|x_n\|^2\right)\\
&\leq c\sum_{n=1}^{\infty}\|x_n\|^2\sum_{m=1}^{\infty}\|Z_{mn}\|
\leq c^2\sum_{n=1}^{\infty}\|x_n\|^2.
\end{align*}
The formula therefore defines an operator taking values in the Hilbert direct sum, which extends uniquely by continuity from the dense subspace of finitely supported vectors to a bounded operator satisfying the stated norm bound.

\end{proof}

\begin{proposition}\label{prop:zero-diagonal}
Suppose that $T\in\mathcal K(\mathcal H)$ has a zero-diagonal with respect to an orthonormal basis
$(e_j)$.
Then there exist $A,B\in\mathcal K(\mathcal H)$ such that
$$
T=[A,B]
\quad {\rm and}\quad
\|A\|\,\|B\|\leq4\kappa\|T\|.
$$
\end{proposition}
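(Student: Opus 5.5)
The plan is to cut $T$ into finite blocks along the basis $(e_j)$. Because the diagonal vanishes, every diagonal block of $T$ is a trace-zero matrix. We take $A$ block diagonal: on the $n$-th block it is a scalar $z_n$ plus a small matrix $U_n$ supplied by Lemma~\ref{lem:matrix}. We then solve for $B$ block by block, using Lemma~\ref{lem:sylvester} on the off-diagonal blocks.

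\textbf{Choosing the blocks.} Pick integers $0=N_0<N_1<N_2<\cdots$ and let $P_k$ be the projection onto $\spanop\{e_j:j\le N_k\}$. By compactness, $\|(I-P_k)T\|$ and $\|T(I-P_k)\|$ tend to zero, so we may choose the $N_k$ so that
\[
\max\{\|(I-P_k)T\|,\|T(I-P_k)\|\}\le\varepsilon_k:=\eta^{k}\|T\|
\]
for a small fixed ratio $\eta$, say $\eta=1/64$. Let $\mathcal E_n=\ran(P_n-P_{n-1})$ and write $T=(T_{mn})$ in these blocks. Then $\|T_{mn}\|\le\varepsilon_{\max(m,n)-1}$, and each $T_{nn}$ is a compression of $T$ to a span of basis vectors, so $\Tr T_{nn}=0$.

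\textbf{Diagonal blocks.} Lemma~\ref{lem:matrix} gives $T_{nn}=[U_n',V_n']$ with $\|U_n'\|,\|V_n'\|\le\sqrt{\kappa\|T_{nn}\|}$. Rescale to $U_n=t_nU_n'$ and $V_n=t_n^{-1}V_n'$, choosing $t_n$ so that $\|U_n\|$ is a small fraction of $s2^{-n}$.

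\textbf{Defining $A$ and $B$.} Set
\[
A=\bigoplus_n\bigl(z_nI+U_n\bigr),\qquad z_n=s2^{-n},
\]
so $A$ is compact because $z_n\to0$ and $\|U_n\|\to0$. Set $B_{nn}=V_n$. For $m\ne n$, let $B_{mn}$ be the solution of
\[
(z_mI+U_m)B_{mn}-B_{mn}(z_nI+U_n)=T_{mn}.
\]
Since $|z_m-z_n|\ge s2^{-\max(m,n)}$ exceeds $\|U_m\|+\|U_n\|$ by a fixed fraction of that quantity, Lemma~\ref{lem:sylvester} applies. It gives
\[
\|B_{mn}\|\lesssim 2^{\max(m,n)}\varepsilon_{\max(m,n)-1}/s,
\]
which decays geometrically in $\max(m,n)$. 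Then $[A,B]=T$ blockwise. Lemma~\ref{lem:schur} bounds $\|B\|$ by the row and column sums $\sum_n\|B_{mn}\|$, and truncated block sums converge in norm to $B$, so $B$ is compact.

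\textbf{Main obstacle: the constant.} The bookkeeping must give $\|A\|\,\|B\|\le4\kappa\|T\|$. The dangerous terms are the low-index ones: $T_{11}$, $T_{1n}$ and $T_{n1}$ are only bounded by $\|T\|$, and $z_1-z_n$ is only about $s/2$.

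I would first try putting all of $\|A\|$ into $z_1\approx s/2$. Then I would choose the $t_n$ so that the diagonal $V_n$ have norm about $\kappa\|T\|/s$, and let $\eta$ be tiny, so that off-diagonal sums contribute a lower-order term. A Schur sum of the form $\sum_{k\ge1}\|T\|/(s2^{-1}(1-2^{-k}))$ is what threatens the constant. If the crude separation wastes too much, I would switch to taking $z_n$ decreasing super-geometrically, using only $|z_1-z_n|\ge z_1/2$ for the first row and column. The condition $\kappa\ge4$ is presumably there to absorb these lower-order terms into the final bound $4\kappa\|T\|$.
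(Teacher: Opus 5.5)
Your architecture is the same as the paper's: block-diagonal $A$ with scalar shifts plus the $U_n$ of Lemma~\ref{lem:matrix}, off-diagonal blocks of $B$ from Lemma~\ref{lem:sylvester}, Lemma~\ref{lem:schur} for $\|B\|$, and compactness by truncation. It does produce compact $A,B$ with $[A,B]=T$. The gap is the quantitative claim $\|A\|\,\|B\|\le4\kappa\|T\|$, which is part of the statement. You name it as the main obstacle but never close it, and the choices you propose do not give it.

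The binding term is the first block. Lemma~\ref{lem:matrix} controls only the product $\|U_1\|\,\|V_1\|\le\kappa\|T_{11}\|$, and $\|T_{11}\|$ may be close to $\|T\|$. So if $\|U_1\|=\alpha z_1$, your estimates give $\|A\|\le(1+\alpha)z_1$. They bound the Schur row sum containing $V_1$ only by $\kappa\|T\|/(\alpha z_1)$. The resulting product bound is $\frac{1+\alpha}{\alpha}\kappa\|T\|$ before any off-diagonal terms, and this is at least $4\kappa\|T\|$ unless $\alpha>1/3$.

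With your spacing $z_n=s2^{-n}$, Lemma~\ref{lem:sylvester} on the $(1,2)$ block needs $\|U_1\|+\|U_2\|<z_1-z_2=z_1/2$. A uniform ``small fraction'' $\epsilon$ therefore forces $\epsilon<1/3$, and the constant exceeds $4$. Your other plan, $\|A\|\approx s/2$ with $\|V_1\|\approx\kappa\|T\|/s$, is inconsistent. Nothing prevents $\|U_1\|=\|U_1'\|\,\|V_1'\|/\|V_1\|$ from being as large as $s\|T_{11}\|/\|T\|\approx s$. That breaks $\|A\|\approx s/2$ and also exceeds the gap $z_1-z_2=s/4$, so Sylvester no longer applies. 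Also, $\kappa\ge4$ plays no role in this proposition; it is used later, in Proposition~\ref{prop:trace-class}.

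The missing ingredient is the calibration. The paper does not rescale, and it ties every parameter to one sequence:
\begin{itemize}
\item $\delta_1=\sqrt{\kappa\|T\|}$ and $\delta_{n+1}\le\delta_n/4$;
\item blocks chosen so that $\max\{\|(I-P_n)T\|,\|T(I-P_n)\|\}\le\delta_{n+1}^2/\kappa$.
\end{itemize}
This gives $\|T_{mn}\|\le\min\{\delta_m,\delta_n\}^2/\kappa$ and $\|U_n\|,\|V_n\|\le\delta_n$.

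With $A_n=2\delta_nI+U_n$ you get $\alpha\le1/2$ and $\|A\|\le3\delta_1$. The ratio $1/4$, rather than your $1/2$, leaves the Sylvester margin $2\delta_m-2\delta_n-\|U_m\|-\|U_n\|\ge\delta_m/4$ for $m<n$. Hence $\|B_{mn}\|\le4\min\{\delta_m,\delta_n\}^2/\max\{\delta_m,\delta_n\}$.

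The row and column sums are then at most $\delta_1+\frac{4}{15}\delta_1$ for index $1$, and below $\frac83\delta_m\le\frac23\delta_1$ otherwise. So $\|B\|\le\frac43\delta_1$ and $\|A\|\,\|B\|\le3\delta_1\cdot\frac43\delta_1=4\kappa\|T\|$.

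Your $2^{-n}$ spacing can also be repaired. Take $\|U_1\|/z_1$ in $(1/3,1/2)$, much smaller fractions for $n\ge2$, and $\eta$ small enough to make the remaining terms negligible. That bookkeeping is exactly what the proposal leaves undone.
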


\begin{proof}
Assume $T$ is non-zero. Let $\delta_1>0$ satisfy $\delta_1^2\geq\kappa\|T\|$. Then we choose a sequence of positive numbers $\delta_n$ for $n\geq2$ such that
$$
\delta_{n+1}\leq\frac{\delta_n}{4},
\qquad n\geq1.
$$
By compactness of $T$, we can choose integers $0<d_1<d_2<\cdots$ such that the
projection $P_n$ onto $\spanop\{e_1,\ldots,e_{d_n}\}$ satisfies
$$
\max\{\|(I-P_n)T\|,\|T(I-P_n)\|\}
\leq \frac{\delta_{n+1}^2}{\kappa},\qquad n\geq1.
$$
Set $P_0=0$, $Q_n=P_n-P_{n-1}$ and $\mathcal E_n=Q_n\HH$. Then
$\HH=\bigoplus_{n\geq1}\mathcal E_n$. Write
\[
T_{mn}=Q_mT|_{\mathcal E_n}\in\BB(\mathcal E_n,\mathcal E_m).
\]
Since $T$ has a zero-diagonal, $T_{nn}$ has trace zero for all $n$. Furthermore,
$$
\|T_{mn}\|\leq\min \{\|Q_mT\|,\|TQ_n\|\} \leq\frac{\min\{\delta_m,\delta_n\}^2}{\kappa}.
$$
By Lemma~\ref{lem:matrix}, there exist matrices $U_n,V_n$ such that
\[
T_{nn}=[U_n,V_n],\qquad
\max\{\|U_n\|,\|V_n\|\}
\leq\sqrt{\kappa\|T_{nn}\|}\leq \delta_n.
\]

Let $A_n=2\delta_nI+U_n$ and $A={\rm diag}\{A_1,A_2,\cdots,A_n,\cdots\}$.
Then
$$
\|A\|\leq3\delta_1,\quad\|A-P_kAP_k\|\leq 3\delta_{k+1}.
$$

 Now we construct the matrix $B$.
Define $Z_{nn}=V_n$ for all $n$. If $m<n$, consider the operator equation
$$A_mX-XA_n=T_{mn}.$$
Since
$\delta_n\leq\delta_m/4$,
\[
|2\delta_m-2\delta_n|-\|U_m\|-\|U_n\|
\geq \frac{\delta_m}{4}.
\]
By Lemma~\ref{lem:sylvester}, the operator equation has a unique solution, which we denote by $Z_{mn}$.
For $n < m$, we define $Z_{mn}$ analogously by interchanging the indices.

Now we have the estimate
$$
\|Z_{mn}\|\leq \frac{\|T_{mn}\|}{|2\delta_m-2\delta_n|-\|U_m\|-\|U_n\|}\leq
4\frac{\min\{\delta_m,\delta_n\}^2}
                         {\max\{\delta_m,\delta_n\}},
\quad m\ne n.
$$

Let $B=(Z_{mn})$.  Thus the matrices  $A,B$ have
the form
\[
A=\begin{pmatrix}
A_1&0&0&\cdots\\0&A_2&0&\cdots\\0&0&A_3&\cdots\\
\vdots&\vdots&\vdots&\ddots
\end{pmatrix},\qquad
B=\begin{pmatrix}
V_1&Z_{12}&Z_{13}&\cdots\\
Z_{21}&V_2&Z_{23}&\cdots\\
Z_{31}&Z_{32}&V_3&\cdots\\
\vdots&\vdots&\vdots&\ddots
\end{pmatrix}.
\]

We now proceed to estimate the sum of the row norms. For $m\geq 2$,
\begin{align*}
  \sum_{n=1}^{\infty}\|Z_{mn}\| & \leq \sum_{n<m}\|Z_{mn}\|+ \|V_m\|+\sum_{n>m}\|Z_{mn}\|\\
   & \leq 4\sum_{n<m}\frac{\delta_m^2}{\delta_n}+\delta_m +4\sum_{n>m}\frac{\delta_n^2}{\delta_m}\\
   & \leq 4\delta_m\sum_{j=1}^{m-1}\frac{1}{4^j} +\delta_m +4\delta_m\sum_{j=1}^{\infty}\frac{1}{16^j} \\
   & < \frac{4}{3}\delta_m+\delta_m+\frac{1}{3}\delta_m < \delta_1.
\end{align*}
When $m=1$, then
$$
  \sum_{n=1}^{\infty}\|Z_{1n}\| \leq \|V_1\|+\sum_{n>1}\|Z_{1n}\|
  \leq \delta_1 +4\delta_1\sum_{j=1}^{\infty}\frac{1}{16^j}\leq\frac{4}{3}\delta_1.
$$
Hence,
$$
\sup_{m\geq 1} \sum_{n=1}^{\infty}\|Z_{mn}\| <\frac{4}{3}\delta_1 .
$$
Since the estimate is symmetric for $m,n$, we also have
$$
\sup_{n\geq 1} \sum_{m=1}^{\infty}\|Z_{mn}\| <\frac{4}{3}\delta_1.
$$
By Lemma~\ref{lem:schur}, we have
$$
\|B\|\leq \frac{4}{3}\delta_1 .
$$

The same estimates hold for $B-P_kBP_k$, we have
\[
\|B-P_kBP_k\|\leq\frac{4}{3}\delta_{k+1}.
\]
Since $P_kAP_k\to A$ and $P_kBP_k\to B$ in norm,  $A$ and $B$ are compact.

For $m,n\leq k$,
\[
\bigl([P_kAP_k,P_kBP_k]\bigr)_{mn}
=\begin{cases}
[U_n,V_n],&m=n,\\
A_mZ_{mn}-Z_{mn}A_n,&m\ne n.
\end{cases}
\]
This means that $[P_kAP_k,P_kBP_k]=P_kTP_k$. Finally,
\[
\|T-P_kTP_k\|
\leq\|(I-P_k)T\|+\|P_kT(I-P_k)\|
\leq \frac{2}{\kappa}\delta_{k+1}^2.
\]
Passing to the norm limit, we have
$$[A,B]=\lim_{k}[P_kAP_k,P_kBP_k]=\lim_{k} P_kTP_k=T.$$
Moreover,
$$
\|A\|\,\|B\|\leq 3\delta_1 \cdot \frac{4}{3}\delta_1=4 \delta_1^2.
$$
Taking $\delta_1=\sqrt{\kappa\|T\|}$ gives
$$
\|A\|\,\|B\|\leq4\kappa\|T\|.
$$
\end{proof}
\section{Operator Decomposition and Idempotent Compression}

We first construct a bounded idempotent whose range has zero
compression under $T$. The range and kernel will then give the
decomposition needed to separate the trace. 

\begin{lemma}\label{lem:compression}
Let $T\in\KK(\HH)$ have infinite rank, and let
$\mathcal M\subset\HH$ be finite-dimensional. There are
infinite-dimensional closed subspaces
$\mathcal L,\mathcal W\subset\mathcal M^\perp$ and a bounded
idempotent $Q$ such that
\[
T\mathcal L\subset\mathcal W,\qquad
\ran Q=\mathcal L,\qquad
\mathcal M,\mathcal W\subset\ker Q,\qquad
\|Q\|\leq2.
\]
In particular, $QTQ=0$, and both $\ran Q$ and $\ker Q$ are
infinite-dimensional.
\end{lemma}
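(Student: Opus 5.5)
We will build $\mathcal L$ and $\mathcal W$ inductively from orthonormal sequences chosen so that the subspaces are almost orthogonal, and then take $Q$ to be the idempotent with range $\mathcal L$ and kernel $\mathcal L^\perp$ perturbed to contain $\mathcal W$. The simplest route is to arrange $\mathcal W\perp\mathcal L$ and $\mathcal M\perp\mathcal L$ exactly, so that the orthogonal projection onto $\mathcal L$ already works. Then $\|Q\|=1\le 2$.

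\textbf{Construction.} Since $T$ has infinite rank, so does $T^*$. Hence for every finite-dimensional $\mathcal F$, the subspace $\{x\in\mathcal F^\perp: Tx\perp\mathcal F\}=\mathcal F^\perp\cap (T^*\mathcal F)^\perp$ has finite codimension. It therefore contains vectors $x$ with $Tx\neq0$; indeed, if $T$ vanished on a finite-codimensional subspace, $T$ would have finite rank.

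We proceed inductively. Set $\mathcal F_0=\mathcal M$. At step $k$, choose a unit vector
\[
\ell_k\in \mathcal F_{k-1}^\perp\cap(T^*\mathcal F_{k-1})^\perp
\]
such that $T\ell_k\ne0$ and $T\ell_k\perp \ell_k$. To get the last condition, take two vectors $x,y$ in that subspace, inside a small subspace on which $T$ is injective, and solve for a combination $x+cy$ with $\langle T(x+cy),x+cy\rangle=0$. This is a numerical-range argument: on a $2$-dimensional subspace the numerical range of the compression is an ellipse, and if it avoids $0$ we enlarge the subspace and use that the compression's numerical range tends to contain $0$, since $T$ is compact and $\mathcal F^\perp$ is infinite-dimensional. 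Then put $w_k=T\ell_k/\|T\ell_k\|$ and $\mathcal F_k=\mathcal F_{k-1}+\spanop\{\ell_k,w_k\}$.

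\textbf{Verification.} By construction:
\begin{itemize}
\item each $\ell_k\perp \mathcal M$, each earlier $\ell_j$, and each earlier $w_j$;
\item each $w_k=T\ell_k/\|T\ell_k\|$ is orthogonal to $\mathcal F_{k-1}$, so $w_k\perp\mathcal M$ and $w_k$ is orthogonal to the earlier $\ell_j,w_j$;
\item $w_k\perp \ell_k$.
\end{itemize}
So $\{\ell_k\}\cup\{w_k\}$ is an orthonormal family in $\mathcal M^\perp$. Let $\mathcal L=\overline{\spanop}\{\ell_k\}$ and $\mathcal W=\overline{\spanop}\{w_k\}$. Then $T\mathcal L\subset\mathcal W$ by continuity and boundedness, $\mathcal L\perp\mathcal W$, and $\mathcal L\perp\mathcal M$.

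Take $Q$ to be the orthogonal projection onto $\mathcal L$. Then $\ker Q=\mathcal L^\perp\supset\mathcal M+\mathcal W$ and $\|Q\|=1$. Moreover $QTQ=0$, because $TQ\HH\subset\mathcal W\subset\ker Q$. Finally, $\ran Q$ is infinite-dimensional, and $\ker Q\supset\mathcal W$ is infinite-dimensional as well.

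\textbf{Main obstacle.} The delicate step is producing $\ell_k$ with $T\ell_k\perp\ell_k$ and $T\ell_k\ne0$ inside the finite-codimensional subspace $\mathcal G$. If this fails, I would use the fact that the allowed $\|Q\|\le2$ permits a non-orthogonal idempotent. Drop the requirement $T\ell_k\perp \ell_k$, take $\ell_k\in\mathcal G$ with $T\ell_k\neq0$, and set $w_k=T\ell_k$ orthogonalized against $\ell_k$. Writing $T\ell_k=\alpha_k\ell_k+\beta_k w_k$, we get $T\mathcal L\not\subset\mathcal W$ in general. To repair this, alternatively choose $\ell_k$ in the kernel of the functional $x\mapsto\langle Tx,x\rangle$ restricted to $\mathcal G$. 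This is a quadratic condition, and any operator whose compression to an infinite-dimensional subspace is compact has $0$ in the closure of the numerical range of that compression. A $0$ that is merely approximate can then be made exact via the Toeplitz–Hausdorff convexity on a $3$-dimensional subspace spanned by vectors with numerical values surrounding $0$; such vectors exist since the essential numerical range of a compact operator is $\{0\}$ and the compression is not a scalar multiple of the identity. Ensuring $T\ell_k\ne0$ simultaneously is arranged by working inside a subspace where $T$ is injective, which has finite codimension complement issues only when $\ker T$ is large; in that case we pick the vectors from $(\ker T)^\perp\cap\mathcal G$, which is infinite-dimensional because $T$ has infinite rank.
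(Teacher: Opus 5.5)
\textbf{There is a genuine gap.} Your main construction needs, at every step, a unit vector $\ell_k$ with $T\ell_k\neq0$ and $\ip{T\ell_k}{\ell_k}=0$. Such vectors need not exist. In fact the whole orthogonal strategy fails in general: if $\mathcal L\perp\mathcal W$ and $T\mathcal L\subset\mathcal W$, then $\ip{T\ell}{\ell}=0$ for every $\ell\in\mathcal L$. Take $T=\operatorname{diag}(1,\tfrac12,\tfrac13,\dots)$ and $\mathcal M=\{0\}$. Then $\ip{T\ell}{\ell}=\|T^{1/2}\ell\|^2>0$ for all $\ell\neq0$, so no nonzero such $\mathcal L$ exists, even though the lemma holds for this $T$. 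Positive trace-class operators are exactly the kind of input Lemma~\ref{lem:finite-trace-block} passes to this lemma.

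The numerical-range step is where this breaks. Write $W(K)=\{\ip{Kx}{x}:\|x\|=1\}$. For a compact $K$ on an infinite-dimensional space one only has $0\in\overline{W(K)}$. The essential numerical range being $\{0\}$ produces values \emph{near} $0$, not values \emph{surrounding} $0$. For the example above, every compression of $T$ has numerical range in $(0,\infty)$, so no Toeplitz--Hausdorff argument on a $2$- or $3$-dimensional subspace can give an exact zero. Your fallback does not help either. After observing that $\|Q\|\le2$ allows a non-orthogonal idempotent, you go back to choosing $\ell_k$ in the zero set of $x\mapsto\ip{Tx}{x}$, which is the same impossible requirement.

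\textbf{The missing idea} is to let $\mathcal L$ and $\mathcal W$ be non-orthogonal but with a uniformly controlled angle. The paper chooses an orthonormal family $x_n,y_n$ such that:
\begin{itemize}
\item $x_n,y_n,Tx_n,Ty_n\in\mathcal M^\perp$;
\item the $Tx_n$ are nonzero and mutually orthogonal;
\item $\|Ty_n\|\le\eta_n\|Tx_n\|$ with $\sum\eta_n^2\le1/100$;
\item $y_n\perp Tx_j$ for all $n,j$.
\end{itemize}
It sets $\mathcal L=\overline{\spanop}\{x_n+y_n\}$ and $\mathcal W=\overline{\spanop}\{Tx_n+Ty_n\}$, so $T\mathcal L\subset\mathcal W$ is automatic. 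Let $P$ be the orthogonal projection onto $\overline{\spanop}\{y_n\}$. Then $\|P\ell\|=\|\ell\|/\sqrt2$ on $\mathcal L$ but $\|Pw\|\le\|w\|/9$ on $\mathcal W$, which forces $\|\ell\|\le2\|\ell+w\|$. Finally $Q$ is the projection onto $\mathcal L$ along $\mathcal W\oplus(\mathcal L+\mathcal W)^\perp$.

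Your inductive scheme can be repaired in the same spirit. Keep $w_k=T\ell_k/\|T\ell_k\|$ without orthogonalizing, and ask only that $|\ip{T\ell_k}{\ell_k}|\le\frac{\sqrt3}{2}\|T\ell_k\|$. The planes $\spanop\{\ell_k,w_k\}$ are then mutually orthogonal and orthogonal to $\mathcal M$. In each plane the projection onto $\C\ell_k$ along $\C w_k$ has norm $(1-|\ip{w_k}{\ell_k}|^2)^{-1/2}\le2$. Such $\ell_k$ do exist: take $\ell_k=(x+y)/\sqrt2$ with $x,y$ orthonormal in $\mathcal F_{k-1}^\perp\cap(T^*\mathcal F_{k-1})^\perp$, $Tx\neq0$, $y\perp Tx$, and $\|Ty\|$ small compared with $\|Tx\|$. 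This gives a ratio of roughly at most $1/\sqrt2$, and it is precisely the paper's pairing trick.
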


\begin{proof}
Choose positive numbers $\eta_n$ with
$\sum_n\eta_n^2\leq1/100$.
We first construct an orthonormal family
$(x_1,y_1,x_2,y_2,\ldots)$ satisfying
\begin{align*}
&x_n,y_n,Tx_n,Ty_n\in\mathcal M^\perp,\\
&\langle Tx_n, Tx_j\rangle=0\,\,(n\ne j),\quad Tx_n\ne0,\\
&\|Ty_n\|\leq\eta_n\|Tx_n\|,\quad \langle y_n, Tx_j\rangle=0\,\,(n,j\geq1).
\end{align*}
Set $\mathcal F_1=\mathcal M+T^*\mathcal M$. Since the restriction of $T$ to $\mathcal F_1^\perp$ is nonzero, otherwise $T=TP_{\mathcal F_n}$ would have finite rank.
We can choose a unit vector $x_1\in\mathcal F_1^\perp$ with $Tx_1\ne0$. Set the space
\[
\mathcal G_1=\mathcal M+T^*\mathcal M+\spanop\{x_1,Tx_1\}.
\]
Then $\mathcal G_1$ is finite-dimensional. Since $T$ is compact, the sequence of images of any orthonormal basis for $\mathcal G_1^\perp$ under $T$ converges to zero in norm.
Thus we can choose a unit vector $y_1\in G_1^\perp$ with
$$\|Ty_1\|\leq\eta_1\|Tx_1\|.$$

Assume inductively that the pairs with indices strictly less than $n$ have already been constructed.
Set
\[
\mathcal F_n=\mathcal M+T^*\mathcal M+
\spanop\{x_j,y_j,T^*Tx_j,T^*y_j:j<n\}.
\]
Choose a unit vector $x_n\in\mathcal F_n^\perp$
with $Tx_n\ne0$. This preserves orthonormality and gives
\[
\ip{Tx_n}{Tx_j}=\ip{x_n}{T^*Tx_j}=0,\qquad
\ip{Tx_n}{y_j}=\ip{x_n}{T^*y_j}=0\quad(j<n).
\]
Now set
\[
\mathcal G_n=\mathcal M+T^*\mathcal M+
\spanop\{x_j,Tx_j:j\leq n\}+
\spanop\{y_j:j<n\}.
\]
Again by compactness, we can choose a unit vector $y_n\in\mathcal G_n^\perp$
with $$\|Ty_n\|\leq\eta_n\|Tx_n\|.$$
Clearly, $\langle y_n, Tx_j\rangle=0$ for $j \leq n$. For $j > n$, the subsequent choice of $x_j$ ensures that $\langle y_n, Tx_j\rangle=0$.

Finally, orthogonality to $\mathcal M+T^*\mathcal M$ puts
each chosen vector and its image in $\mathcal M^\perp$.
This completes the inductive construction.

For $a=(a_n)\in\ell^2(\mathbb N)$, define
$$
J_1a=\sum_{n=1}^{\infty}a_n\frac{x_n+y_n}{\sqrt2},
\qquad
J_2a=\sum_{n=1}^{\infty}a_n
       \frac{Tx_n+Ty_n}{\|Tx_n\|}.
$$
We first verify that these series converge in norm. Since the family $(x_1,y_1,x_2,y_2,\cdots)$ is orthonormal,
$$
\|J_1a\|=\|a\|_{\ell^2}.
$$

For the second series, we have
$$
\left\|
\sum_{n=1}^{\infty}a_n\frac{Ty_n}{\|Tx_n\|}
\right\|
\leq \sum_{n=1}^{\infty}|a_n|\eta_n
\leq
\left(\sum_{n=1}^{\infty}|a_n|^2\right)^{1/2}
\left(\sum_{n=1}^{\infty}\eta_n^2\right)^{1/2}
\leq \frac1{10}
\left(\sum_{n=1}^{\infty}|a_n|^2\right)^{1/2}.
$$
Since the vectors $Tx_n/\|Tx_n\|$ are orthonormal,
$$
\frac9{10}
\left(\sum_{n=1}^{\infty}|a_n|^2\right)^{1/2}
\leq
\left\|
\sum_{n=1}^{\infty}a_n\frac{Tx_n+Ty_n}{\|Tx_n\|}
\right\|
\leq
\frac{11}{10}
\left(\sum_{n=1}^{\infty}|a_n|^2\right)^{1/2}.
$$
Then we obtain
$$
\frac9{10}\|a\|_{\ell^2}
\leq \|J_2a\|
\leq \frac{11}{10}\|a\|_{\ell^2}.
$$
Thus $J_1,J_2:\ell^2(\mathbb N)\to\mathcal H$
are bounded linear operators and both $\operatorname{ran}J_1$ and $\operatorname{ran}J_2$ are closed. 

Define
$$
\mathcal L=\operatorname{ran}J_1=\overline{\operatorname{span}}\{x_n+y_n:n\geq1\},
\quad
\mathcal W=\operatorname{ran}J_2=\overline{\operatorname{span}}\{Tx_n+Ty_n:n\geq1\}.
$$
Then
$$
T\mathcal L\subset\mathcal W\quad
{\rm and}
\quad \mathcal L,\mathcal W\subset\mathcal M^\perp.
$$
 Since $J_1$ and $J_2$ are injective, $\mathcal L$ and $\mathcal W$ are infinite-dimensional.


Let $P$ be the orthogonal projection onto
$$
\overline{\operatorname{span}}\{y_n:n\geq1\}.
$$
For $\ell=J_1b\in\mathcal L$, we have
$$
P\ell=\frac1{\sqrt2}\sum_{n=1}^{\infty}b_ny_n,
\qquad
\|P\ell\|=\frac1{\sqrt2}\|\ell\|\geq \frac{2}{3}\|\ell\|.
$$
On the other hand, $\langle Tx_n, y_j\rangle=0$ for all $n,j$.
Thus, for $w=J_2a\in\mathcal W$,
$$
\|Pw\|
=\left\|
P\sum_{n=1}^{\infty}a_n\frac{Ty_n}{\|Tx_n\|}
\right\|
\leq\frac1{10}\|a\|_{\ell^2}
\leq\frac19\|w\|.
$$
Suppose that $\ell=w\in\mathcal L\cap\mathcal W$, then these estimates give
$$
\frac{2}{3}\|\ell\|\leq\|P\ell\|=\|Pw\|\leq\frac19\|w\|,
$$
so $\mathcal L\cap\mathcal W=\{0\}$.

Moreover, for $\ell\in\mathcal L$ and $w\in\mathcal W$,
$$
\|P\ell\|\leq\|P(\ell+w)\|+\|Pw\|\leq\|\ell+w\|+\frac19\|w\|.
$$
Together with $\|w\|\leq \|\ell+w\|+\|\ell\|$, we have
$$
\frac{2}{3}\|\ell\|\leq\|P\ell\|\leq\frac{10}{9}\|\ell+w\|+\frac19\|\ell\|.
$$
Thus
$$
\|\ell\|\leq2\|\ell+w\|.
$$
This also shows that $\mathcal L+\mathcal W$ is closed.

Set $\mathcal N=(\mathcal L+\mathcal W)^\perp$.
Every vector in $\mathcal H$ has a unique decomposition
$\ell+w+z$, where
$\ell\in\mathcal L$, $w\in\mathcal W$, and $z\in\mathcal N$.
Define
$$
Q(\ell+w+z)=\ell.
$$
Then
$$
\|Q(\ell+w+z)\|
=\|\ell\|
\leq2\|\ell+w\|
\leq2\|\ell+w+z\|.
$$
Thus $Q$ is a bounded idempotent with $\|Q\|\leq2$, and
$$
\operatorname{ran}Q=\mathcal L,
\qquad
\ker Q=\mathcal W\oplus\mathcal N.
$$
Since $\mathcal L,\mathcal W\subset\mathcal M^\perp$,
we have $\mathcal M\subset\mathcal N$.
Hence $\mathcal M,\mathcal W\subset\ker Q$.
The inclusion $T\mathcal L\subset\mathcal W$ gives
$QTQ=0$. Both the range and kernel of $Q$ are
infinite dimensional.

Relative to the bounded direct-sum decomposition
$$
\mathcal H=\mathcal L\dotplus\mathcal W\dotplus\mathcal N,
$$
the operators have the forms
$$
Q=
\begin{array}{c@{\;}c}
\begin{pmatrix}
I&0&0\\
0&0&0\\
0&0&0
\end{pmatrix}
&
\begin{matrix}
\mathcal L\\
\mathcal W\\
\mathcal N
\end{matrix}
\end{array},
\qquad
T=
\begin{array}{c@{\;}c}
\begin{pmatrix}
0&*&*\\
*&*&*\\
0&*&*
\end{pmatrix}
&
\begin{matrix}
\mathcal L\\
\mathcal W\\
\mathcal N
\end{matrix}
\end{array}.
$$
\end{proof}

For  trace-class operators,
we obtain a uniform bound from the similarity. This specific version is essential for the subsequent arguments.

\begin{lemma}
\label{lem:finite-trace-block}
Let $T\in\SSp_1(\HH)$ have infinite rank. There are an
orthogonal decomposition $\HH=\HH_0\oplus\HH_1$ into
infinite-dimensional subspaces and a bounded invertible
operator $S$ such that
\[
\|S\|,\|S^{-1}\|\leq2,\qquad
S^{-1}TS=
\begin{pmatrix}
D&E\\
F&G
\end{pmatrix}
\begin{array}{l}
\HH_0\\
\HH_1
\end{array},
\]
where $D$ has finite rank, $\Tr(D)=\Tr(T)$, $E,F$ are
compact, and $G\in\SSp_1(\HH_1)$ satisfies $\Tr(G)=0$.
\end{lemma}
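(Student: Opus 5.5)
The plan is to build $S$ from the idempotent of Lemma~\ref{lem:compression}, so that a whole infinite-dimensional summand gets zero compression. The finite-rank block $D$ then only has to carry the trace of a suitably chosen finite-dimensional subspace. The order is: first choose a finite-dimensional subspace $\mathcal M$ that carries the trace, then apply Lemma~\ref{lem:compression} with this $\mathcal M$, and finally conjugate by a similarity that straightens the decomposition $\ran Q\dotplus\ker Q$ into an orthogonal one.

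\textbf{Step 1 (the trace-carrying subspace).} Fix an orthonormal basis $(e_j)$. Since $T\in\SSp_1(\HH)$, we have $\sum_j\ip{Te_j}{e_j}=\Tr(T)$ absolutely. I want a finite-dimensional $\mathcal M$ with $\Tr(P_{\mathcal M}TP_{\mathcal M})=\Tr(T)$ \emph{exactly}, and with the compression of $T$ to the eventual complement having trace $0$. To get exactness I would take $\mathcal M=\spanop\{e_1,\dots,e_N\}$ and absorb the small tail trace $\tau_N=\sum_{j>N}\ip{Te_j}{e_j}$. This could be done by adding one or two further unit vectors from the tail, chosen so that their diagonal values sum to $\tau_N$. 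The tail compression has trace $\tau_N$ on an infinite-dimensional space, so one can try to realize $\tau_N$ as a sum of diagonal entries by a Fan-type zero-diagonal argument \cite{Fan} applied to (tail compression) minus a rank-one correction. Alternatively, one can run a two-parameter continuity argument on vectors $\cos\theta\, e_k+e^{i\phi}\sin\theta\, e_l$. \textbf{This exact trace matching is the step I expect to be the main obstacle.} The sum of the diagonal entries of $T$ in any orthonormal basis of the complement must be exactly $0$, and it is not automatic that a finite subspace achieves this.

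\textbf{Step 2 (idempotent compression).} Apply Lemma~\ref{lem:compression} with $\mathcal M$ enlarged to $\mathcal M+T\mathcal M+T^*\mathcal M$. This gives a bounded idempotent $Q$ with $\|Q\|\leq2$, $QTQ=0$, $\mathcal M\subset\ker Q$, and infinite-dimensional $\mathcal L=\ran Q$ and $\ker Q$. Split $\mathcal L=\mathcal L_0\oplus\mathcal L_1$ into two infinite-dimensional pieces. Since the $\mathcal L$-block of $T$ in the decomposition $\mathcal L\dotplus\ker Q$ vanishes, contributions from $\mathcal L$ to any diagonal trace are zero.

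\textbf{Step 3 (the similarity).} Define $S$ by sending $\mathcal L$ identically to itself and $\mathcal L^\perp$ onto $\ker Q$, for instance $S^{-1}=Q+P_{\mathcal L^\perp}(I-Q)$. Then verify $\|S\|,\|S^{-1}\|\leq2$ from $\|Q\|\leq2$ and the angle estimate $\|\ell\|\leq2\|\ell+w\|$ in the proof of Lemma~\ref{lem:compression}. If the constant is not met, shrink the $\eta_n$ there so that $\mathcal L$ and $\ker Q$ are nearly orthogonal. Now put $\HH_0=S^{-1}\mathcal M\oplus\mathcal L_0$ and $\HH_1=(\mathcal L^\perp\ominus S^{-1}\mathcal M)\oplus\mathcal L_1$, both infinite-dimensional. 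In $S^{-1}TS$, the $\mathcal L_0$-block is zero, so $D$ consists only of the $\mathcal M$-block and its couplings with $\mathcal L_0$. Hence $D$ has rank at most $2\dim\mathcal M$, and $\Tr(D)=\Tr(P_{\mathcal M}TP_{\mathcal M})=\Tr(T)$. The operators $E$ and $F$ are compact as compressions of a compact operator. Finally, $G$ is trace class with $\Tr(G)=\Tr(T)-\Tr(D)=0$, using similarity invariance of the trace and the vanishing of the $\mathcal L$-diagonal contributions.
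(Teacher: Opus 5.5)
There is a genuine gap, and it is exactly at the step you flagged: Step~1 is not just delicate, it is impossible in general. In your Step~3 the similarity is the identity on $\mathcal M$, because $\mathcal M\subset\ker Q$ and $\mathcal M\perp\mathcal L$. So, as you say, $\Tr(D)=\Tr(P_{\mathcal M}TP_{\mathcal M})$, and therefore
\[
\Tr(G)=\Tr(T)-\Tr(P_{\mathcal M}TP_{\mathcal M})=\Tr(P_{\mathcal M^\perp}TP_{\mathcal M^\perp}).
\]
Now take any $T\geq0$ of infinite rank, e.g.\ $T=\mathrm{diag}(1,\tfrac14,\tfrac19,\dots)$. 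For every finite-dimensional $\mathcal M$, the compression $P_{\mathcal M^\perp}TP_{\mathcal M^\perp}$ is positive. Its trace vanishes only if $T^{1/2}P_{\mathcal M^\perp}=0$, i.e.\ only if $T=P_{\mathcal M}TP_{\mathcal M}$ has finite rank. Hence $\Tr(G)>0$ for every choice of $\mathcal M$. Equivalently, no finite orthonormal family in the tail has diagonal values summing to $\tau_N$, so neither a Fan-type argument nor the continuity argument on $\cos\theta\,e_k+e^{i\phi}\sin\theta\,e_l$ can produce one. Positive operators are among the cases the lemma must cover, so exact trace matching cannot come from an \emph{orthogonal} compression onto a finite-dimensional subspace.

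The missing idea is to make the trace-carrying finite-rank piece an \emph{oblique} idempotent. The paper proceeds as follows.
\begin{itemize}
\item It picks orthonormal $x,y$ with $\ip{Ty}{x}\neq0$ and $|\ip{Ty}{y}|\leq\frac54|\ip{Ty}{x}|$.
\item It picks a finite-rank orthogonal projection $P$ with $Px=x$, $Py=0$, and $\Tr(T)-\Tr(TP)$ within $\frac14|\ip{Ty}{x}|$ of $\ip{Ty}{y}$.
\item It sets $R=P+\alpha K_0$ with $K_0=\ip{\cdot}{x}y$. Since $PK_0=0$, $K_0P=K_0$ and $K_0^2=0$, $R$ is an idempotent with $\|R\|^2=1+|\alpha|^2$.
\item Now $\Tr(TR)=\Tr(TP)+\alpha\ip{Ty}{x}$ is affine in the free complex parameter $\alpha$. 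Solving gives $\Tr(TR)=\Tr(T)$ exactly, with $|\alpha|<3/2$.
\item Lemma~\ref{lem:compression} is applied with $\mathcal M=\ran R+\ran R^*+T\ran R+T^*\ran R^*$. This yields $RQ=QR=RTQ=QTR=QTQ=0$, so $R+Q$ is an idempotent with $(R+Q)T(R+Q)=RTR$ and $\|R+Q\|\leq2$.
\item It takes $\HH_0=\ran(R+Q)$ and $\HH_1=\HH_0^\perp$, and writes $R+Q=\begin{pmatrix}I&R_0\\0&0\end{pmatrix}$. The balanced similarity $S=\sqrt2\begin{pmatrix}I&-R_0/2\\0&I/2\end{pmatrix}$ satisfies $S^{-1}(R+Q)S=I\oplus0$ and $\|S\|,\|S^{-1}\|\leq2$.
\item Consequently $D\oplus0=S^{-1}RTRS$, so $\Tr(D)=\Tr(TR)=\Tr(T)$.
\end{itemize}
Here $D$ is an oblique compression of $T$, so positivity no longer forces $\Tr(D)<\Tr(T)$. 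The rest of your architecture (Lemma~\ref{lem:compression} plus a straightening similarity) then goes through. The difference is that the similarity must straighten $R+Q$ rather than $Q$ alone.
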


\begin{proof}
Since $T\ne0$, polarization yields a unit vector $u$ with
$\ip{Tu}{u}\ne0$. By compactness, choose a unit vector
$v\in\spanop\{u,Tu,T^*u\}^{\perp}$ such that
\[
|\ip{Tv}{v}|\leq \frac{|\ip{Tu}{u}|}{9}.
\]
 Set
\[
x=\frac{u+v}{\sqrt2},\qquad
y=\frac{v-u}{\sqrt2}.
\]
Then $x,y$ are orthonormal, and
\[
\ip{Ty}{x}=\frac{\ip{Tv}{v}-\ip{Tu}{u}}{2}\ne0,\qquad
\ip{Ty}{y}=\frac{\ip{Tv}{v}+\ip{Tu}{u}}{2}.
\]
In particular,
$$
\frac{|\ip{Ty}{y}|}{|\ip{Ty}{x}|}\leq
\frac{|\ip{Tv}{v}|+|\ip{Tu}{u}|}{|\ip{Tu}{u}|-|\ip{Tv}{v}|}\leq \frac{5}{4}.
$$

Next we choose an orthonormal basis of $y^\perp$ beginning with $x$. Since the diagonal series of $T$ converges absolutely to $\Tr(T)$, there is an orthogonal projection $P$ onto a finite span of this basis such that
\[
|\Tr(T)-\Tr(TP)-\ip{Ty}{y}|<\frac{1}{4}|\ip{Ty}{x}|.
\]
Set
\[
\alpha=\frac{\Tr(T)-\Tr(TP)}{\ip{Ty}{x}}.
\]
Then we have $|\alpha|<3/2$.

Define
$$
K_0z=\langle z,x\rangle y,
\qquad
R=P+\alpha K_0.
$$
Since $Px=x$ and $Py=0$, we have
$$
PK_0=0,\qquad K_0P=K_0,\qquad K_0^2=0.
$$
Thus $R$ is a finite-rank idempotent. Moreover,
$$
R^*R
=P+|\alpha|^2K_0^*K_0
=(P-K_0^*K_0)+(1+|\alpha|^2)K_0^*K_0.
$$
Then
$$
\|R\|^2=1+|\alpha|^2<4.
$$
By the definition of $\alpha$,
$$
\Tr(TR)
=\Tr(TP)+\alpha\langle Ty,x\rangle
=\Tr(T).
$$

Apply Lemma~\ref{lem:compression} to $T$ and the
finite-dimensional space
$$
\mathcal M
=\operatorname{ran}R+\operatorname{ran}R^*
 +T(\operatorname{ran}R)+T^*(\operatorname{ran}R^*).
$$
Let $Q,\mathcal L,\mathcal W$ be the resulting objects.
Then
$$
T\mathcal L\subset\mathcal W,\qquad
\operatorname{ran}Q=\mathcal L\subset\mathcal M^\perp,
\qquad
\mathcal M,\mathcal W\subset\ker Q,
\qquad
\|Q\|\leq2.
$$
The choice of $\mathcal M$ gives
$$
RQ=QR=RTQ=QTR=0.
$$
Together with $QTQ=0$, these identities imply
$$
(R+Q)^2=R+Q,
\qquad
(R+Q)T(R+Q)=RTR.
$$

The commuting idempotents $R$ and $Q$ give the bounded
direct-sum decomposition
$$
\mathcal H
=\operatorname{ran}R
 \dotplus\mathcal L
 \dotplus\ker(R+Q).
$$
Relative to this decomposition, $T$ has the form
$$
T=
\begin{array}{c@{\;}c}
\begin{pmatrix}
*&0&*\\
0&0&*\\
*&*&*
\end{pmatrix}
&
\begin{matrix}
\operatorname{ran}R\\
\mathcal L\\
\ker(R+Q)
\end{matrix}
\end{array}.
$$

Relative to the orthogonal decomposition
$\mathcal H=\mathcal M\oplus\mathcal M^\perp$,
the operators $R$ and $Q$ act on different summands.
Therefore
$$
\|R+Q\|=\max\{\|R\|,\|Q\|\}\leq2.
$$
Set
$$
\mathcal H_0=\operatorname{ran}(R+Q),
\qquad
\mathcal H_1=\mathcal H_0^\perp.
$$
Since $\mathcal L\subset\mathcal H_0$, the space
$\mathcal H_0$ is infinite dimensional.
Moreover, $\mathcal W\subset\ker(R+Q)$.
The map
$$
\mathcal H_1\longrightarrow\ker(R+Q),
\qquad h\longmapsto (I-R-Q)h,
$$
is a bounded isomorphism, whose inverse is the restriction
of the orthogonal projection onto $\mathcal H_1$.
Thus $\mathcal H_1$ is also infinite dimensional.

Since $R+Q$ is the identity on its range, it has the form
$$
R+Q=
\begin{array}{c@{\;}c}
\begin{pmatrix}
I&R_0\\
0&0
\end{pmatrix}
&
\begin{matrix}
\mathcal H_0\\
\mathcal H_1
\end{matrix}
\end{array}.
$$
 By the spectral mapping theorem and spectral radius formula,
$$
1+\|R_0R_0^*\|
=\|I+R_0R_0^*\|
=\|R+Q\|^2
\leq4.
$$
In particular, $\|R_0\|^2=\|R_0R_0^*\|\leq3$.

Define
$$
S=\sqrt2
\begin{pmatrix}
I&-R_0/2\\
0&I/2
\end{pmatrix},
\qquad
S^{-1}=\frac1{\sqrt2}
\begin{pmatrix}
I&R_0\\
0&2I
\end{pmatrix}.
$$
Then we have
$$
S^{-1}(R+Q)S=
\begin{pmatrix}
I&0\\
0&0
\end{pmatrix}.
$$
Now we estimate the norms,
$$
\|S^*S\|=\left\|
\begin{pmatrix}
2I&-R_0\\
-R_0^*&(I+R_0^*R_0)/2
\end{pmatrix}\right\|\leq
\left\|
\begin{pmatrix}
2I&0\\
0&(I+R_0^*R_0)/2
\end{pmatrix}\right\|+\left\|
\begin{pmatrix}
0&R_0\\
R_0^*&0
\end{pmatrix}\right\|
\leq 4.
$$
Thus $\|S\|\leq 2$. A similar calculation shows $\|S^{-1}\|\leq 2$.

Write
$$
\widehat T=S^{-1}TS=
\begin{array}{c@{\;}c}
\begin{pmatrix}
D&E\\
F&G
\end{pmatrix}
&
\begin{matrix}
\mathcal H_0\\
\mathcal H_1
\end{matrix}
\end{array}.
$$
Then
$$
\begin{pmatrix}
D&0\\
0&0
\end{pmatrix}
=S^{-1}(R+Q)T(R+Q)S
=S^{-1}RTRS.
$$
Thus $D$ has finite rank, and
$$
\Tr(D)=\Tr(RTR)=\Tr(TR)=\Tr(T).
$$
Since $T$ is trace class, so is $\widehat T$.
Its blocks $E,F$ are compact, and $G$ is trace class.
Finally,
$$
\Tr(G)=\Tr(\widehat T)-\Tr(D)
=\Tr(T)-\Tr(T)=0.
$$
\end{proof}
The following lemma extends Marcoux's technique \cite[Lemma~2.2]{Marcoux} to develop a block-diagonal spectral separation technique that incorporates a prescribed compact commutator and a zero-diagonal compact operator in the diagonal entries. Moreover, our construction guarantees that the norms of the new factors are controlled by the initial operators.

\begin{lemma}\label{lem:joining}
Let $\HH_0,\HH_1$ be separable infinite-dimensional Hilbert spaces,
and let
\[
T=\begin{pmatrix}D&E\\F&G\end{pmatrix}
\in\KK(\HH_0\oplus\HH_1).
\]
Suppose that $D=[A_0,B_0]$ with $A_0,B_0\in\KK(\HH_0)$,
and that $G$ has a  zero-diagonal. Then there exist compact operators $A,B\in\KK(\HH_0\oplus\HH_1)$ such that
$$
T=[A,B],\qquad
\|A\|\|B\|
\leq 24\max\{\|A_0\|^2,\|B_0\|^2,\kappa\|T\|\}.
$$
\end{lemma}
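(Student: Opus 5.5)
The plan is to reuse the block construction from the proof of Proposition~\ref{prop:zero-diagonal} on $\HH_1$, keep $A_0,B_0$ on $\HH_0$ as one extra ``block'', and solve for the off-diagonal blocks coupling $\HH_0$ and $\HH_1$ with Lemma~\ref{lem:resolvent}. Assume $T\ne0$ and set $M=\max\{\|A_0\|,\|B_0\|,\sqrt{\kappa\|T\|}\}$.

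\emph{Blocks and $A$.} Let $(e_j)$ be a basis of $\HH_1$ in which $G$ has zero diagonal. Split $\HH_1=\bigoplus_{n\ge1}\mathcal E_n$ into consecutive finite blocks $Q_n$, so that each $G_{nn}=Q_nG|_{\mathcal E_n}$ has trace zero. Lemma~\ref{lem:matrix} gives $G_{nn}=[U_n,V_n]$. Choose scalars $\delta_n\notin\sigma(A_0)$ with $\delta_{n+1}\le\delta_n/4$. Take $\delta_1$ real with $\delta_1\ge 4M$; then $|\delta_1|>\|A_0\|$ and $r_1:=\|(A_0-\delta_1 I)^{-1}\|\le 1/(\delta_1-\|A_0\|)$. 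Since $\sigma(A_0)$ is countable, the later $\delta_n$ can avoid it, so each $r_n:=\|(A_0-2\delta_nI)^{-1}\|$ is finite, though possibly huge.

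Choose the block sizes $d_n$ inductively, after $\delta_{n+1}$ and $r_{n+1}$ are fixed. By compactness of $G,E,F$, the tails beyond $d_n$ can be made small:
\[
\|(I-P_n)G\|,\ \|G(I-P_n)\|\le \frac{\delta_{n+1}^2}{\kappa},\qquad
\|E(I-P_n)\|,\ \|(I-P_n)F\|\le \frac{\delta_{n+1}^2}{\kappa\,r_{n+1}\,4^{n}}.
\]
In particular $\|U_{n+1}\|\le\delta_{n+1}$, and we may also arrange $r_{n+1}\delta_{n+1}\le 1/2$ (shrinking $d_n$-tails further if needed). Then define
\[
A=A_0\oplus\operatorname{diag}(2\delta_nI+U_n).
\]
This is compact because the diagonal blocks tend to $0$ in norm.

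\emph{Building $B$.} Put $B_0$ in the $(0,0)$ slot and $V_n$ in the $(n,n)$ slots. For $m\ne n\ge1$, take $Z_{mn}$ from Lemma~\ref{lem:sylvester} exactly as in Proposition~\ref{prop:zero-diagonal}; this gives the bound $4\min\{\delta_m,\delta_n\}^2/\max\{\delta_m,\delta_n\}$. For the coupling blocks, solve
\[
A_0X_n-X_n(2\delta_nI+U_n)=E|_{\mathcal E_n},\qquad
(2\delta_nI+U_n)Y_n-Y_nA_0=Q_nF
\]
by Lemma~\ref{lem:resolvent}. Write $\widetilde r_n=\|(A_0-2\delta_nI)^{-1}\|$ and arrange $\widetilde r_n\|U_n\|\le 1/2$. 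For $n=1$ this holds by the size of $\delta_1$. For $n\ge2$ it holds by the choice of block sizes, since $\|U_n\|\le\sqrt{\kappa\|G_{nn}\|}$ can be made $\le 1/(2\widetilde r_n)$ by pushing $d_{n-1}$ further out. The resulting bounds are
\[
\|X_n\|\le 2\widetilde r_n\|EQ_n\|,\qquad \|Y_n\|\le 2\widetilde r_n\|Q_nF\|.
\]
For $n=1$ these are $\lesssim\|T\|/M$. For $n\ge2$ the block choice makes them $\le\delta_n^2 4^{-n}$, which is summable and tends to $0$.

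\emph{Verification.} Lemma~\ref{lem:schur} bounds $\|B\|$ by the row and column sums: roughly $\|B_0\|+\sum_n\|X_n\|\le M+O(M)$ for the $\HH_0$ row, and $\tfrac43\delta_1+O(M)$ for the other rows. Meanwhile $\|A\|\le\max\{\|A_0\|,3\delta_1\}$. With $\delta_1$ a fixed multiple of $M$, the product $\|A\|\|B\|$ is bounded by a constant times $M^2$. I expect the constants to come out below $24$ after choosing $\delta_1=2M$ or so, but this requires careful bookkeeping. Compactness of $B$ follows, as in Proposition~\ref{prop:zero-diagonal}, from the tail estimates $\|B-P_kBP_k\|\to0$, using the block diagonal together with the decaying $X_n,Y_n$; note that $B_0$ is compact but is not truncated. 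The identity $[A,B]=T$ is then checked block by block: the $(0,0)$ entry gives $[A_0,B_0]=D$, the $(n,n)$ entries give $G_{nn}$, the $(m,n)$ entries give $G_{mn}$, and the coupling entries give $E,F$. Convergence holds in norm, as in the earlier proof.

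\emph{Main obstacle.} The delicate point is that $\sigma(A_0)$ accumulates at $0$, exactly where the shifts $2\delta_n$ must go for $A$ to be compact. So no uniform spectral separation is available, and the resolvent norms $\widetilde r_n$ may blow up. The plan above handles this by letting $\widetilde r_n$ be arbitrary and compensating through the compactness of $E,F,G$. This only works if, at each inductive step, $\delta_n$ and hence $\widetilde r_n$ are fixed before the block containing $\mathcal E_n$ is chosen. Ordering the choices so that this is consistent, while keeping the first block's constants tied only to $M$, is the step I expect to need the most care.
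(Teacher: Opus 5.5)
\textbf{Verdict: same approach as the paper, but the quantitative bookkeeping does not close as written.}

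Your construction is the paper's own. It uses $A=A_0\oplus\operatorname{diag}(2\delta_nI+U_n)$, takes the $\HH_1$-part of $B$ from Proposition~\ref{prop:zero-diagonal}, and gets the coupling blocks $X_n,Y_n$ from Lemma~\ref{lem:resolvent}. The ordering problem you flag is resolved exactly as you suggest. The paper first fixes all $\delta_n$ (with $\delta_1=2M$, $\delta_{n+1}\le\delta_n/4$, $2\delta_{n+1}\notin\sigma(A_0)$), and hence all $r_n$. Only then does it choose the blocks, requiring the $G$-tails beyond $P_{n-1}$ to be at most $\min\{\delta_n^2/\kappa,\,1/(4\kappa r_n^2)\}$. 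This gives $\|U_n\|\le\min\{\delta_n,1/(2r_n)\}$.

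The problem is your tail condition on $E,F$. The bound $\delta_{n+1}^2/(\kappa r_{n+1}4^n)$ has the wrong homogeneity. It yields $\|X_n\|,\|Y_n\|\le 2\delta_n^2/(\kappa 4^{n-1})$ for $n\ge2$, so $\sum_n\|X_n\|$ is of order $\delta_1^2\sim M^2$ rather than $M$. For large $M$, your claimed bound $\|B\|\le M+O(M)$ therefore fails, and with it any estimate $\|A\|\|B\|\le CM^2$. The paper instead requires
$\max\{\|E(I-P_{n-1})\|,\|(I-P_{n-1})F\|\}\le M/(2^{n+1}r_n)$. For $n=1$ this follows from $\|T\|\le M^2/\kappa$ and $r_1\le 1/(3M)$. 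It gives $\|X_n\|,\|Y_n\|\le M2^{-n}$, hence $\|X\|,\|Y\|\le M$. Alternatively, you could first normalize $M=1$ by homogeneity.

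Your aside that one may arrange $r_{n+1}\delta_{n+1}\le 1/2$ cannot be achieved in general. Since $0\in\sigma(A_0)$, we have $r_{n+1}\ge 1/(2\delta_{n+1})$, and block sizes cannot affect this. It is not needed, though: the hypothesis of Lemma~\ref{lem:resolvent} is $r_n\|U_n\|\le 1/2$, which you state correctly afterwards.

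With the corrected tails, the constant $24$ is immediate rather than a matter of expectation. Taking $\delta_1=2M$ gives $\|A\|\le 3\delta_1=6M$. It also gives $\|B\|\le\max\{\|B_0\|,\tfrac43\delta_1\}+\max\{\|X\|,\|Y\|\}\le\tfrac83M+M\le 4M$; your Schur-test route even gives $\tfrac{19}{6}M$. Hence $\|A\|\|B\|\le 24M^2$.
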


\begin{proof}
Assume that $T\neq 0$,  and set
\[
c=\max\{\|A_0\|,\|B_0\|,\sqrt{\kappa\|T\|}\},
\qquad \delta_1=2c.
\]
Fix an orthonormal basis $(e_j)_{j\geq 1}$ of $\HH_1$ in which $G$ has a zero-diagonal.

 Since $\sigma (A_0)\backslash\{0\}$ is countable, we choose a sequence $(\delta_n)$ recursively such that
$$0<\delta_{n+1}\leq\frac{\delta_n}{4}, \qquad2\delta_{n+1}\notin \sigma(A_0).$$
Set $r_n=\|(A_0-2\delta_nI)^{-1}\|$. Since $\delta_1=2c$ and $\|A_0\|\leq c$,
\[
r_1=\|(A_0-2\delta_1I)^{-1}\|\leq\frac1{2\delta_1-\|A_0\|}
    \leq\frac1{3c}.
\]
Consequently,
\[
\|G\|\leq\|T\|\leq\frac{c^2}{\kappa}
\leq\frac1{4\kappa r_1^2},
\]
and
\[
\max\{\|E\|,\|F\|\}
\leq\|T\|\leq \frac{c^2}{\kappa}\leq\frac{c}{3\kappa r_1}\leq \frac{c}{4r_1}.
\]

Set $P_0=0$. By compactness, we can choose integers $0<d_1<d_2<\cdots$ such that the
projections $(P_n)_{n\geq 1}$ onto $\spanop\{e_1,\ldots,e_{d_n}\}$ satisfy
$$
\begin{aligned}
\max\{\|(I-P_{n-1})G\|,\|G(I-P_{n-1})\|\}
&\leq\min\left\{\frac{\delta_n^2}{\kappa},
                         \frac1{4\kappa r_n^2}\right\},\\
\max\{\|E(I-P_{n-1})\|,\|(I-P_{n-1})F\|\}
&\leq\frac{c}{2^{n+1}r_n}.
\end{aligned}
$$

Set $$Q_n=P_n-P_{n-1},\quad \mathcal E_n=Q_n\HH_1,\quad
G_{mn}=Q_mG|_{\mathcal E_n}.$$
Then
$$\|G_{mn}\|\leq \min\{\frac{\delta_m^2}{\kappa}, \frac1{4\kappa r_m^2},\frac{\delta_n^2}{\kappa},
                         \frac1{4\kappa r_n^2}\}.$$
By Lemma~\ref{lem:matrix}, we get
\[
G_{nn}=[U_n,V_n],\qquad
\max\{\|U_n\|,\|V_n\|\}\leq\sqrt{\kappa\|G_{nn}\|}.
\]
Hence for $n\geq 1$,
\[
\|U_n\|,\|V_n\|\leq \delta_n,\qquad \|U_n\|\leq \sqrt{\frac{1}{4r_n^2}}=\frac{1}{2 r_n}.
\]


Set
\[
A_n=2\delta_n I+U_n.
\]
Apply the construction in
Proposition~\ref{prop:zero-diagonal} to $G$, using these
particular diagonal factors.
This yields  compact operators on $\mathcal H_1$ of the form
\[
A_{\mathrm t}=
\begin{pmatrix}
A_1&0&0&\cdots\\
0&A_2&0&\cdots\\
0&0&A_3&\cdots\\
\vdots&\vdots&\vdots&\ddots
\end{pmatrix},
\qquad
B_{\mathrm t}=
\begin{pmatrix}
V_1&B_{12}&B_{13}&\cdots\\
B_{21}&V_2&B_{23}&\cdots\\
B_{31}&B_{32}&V_3&\cdots\\
\vdots&\vdots&\vdots&\ddots
\end{pmatrix},
\]
such that
\[
[A_{\mathrm t},B_{\mathrm t}]=G,\quad
\|A_{\mathrm t}\|\leq 3\delta_1,\quad\|B_{\mathrm t}\|\leq
 \frac{4}{3}\delta_1.
\]

Applying Lemma~\ref{lem:resolvent} with
$$
E_n:=E|_{\mathcal E_n}\in \mathcal{B}(\mathcal{E}_n,\mathcal{H}_0), \quad F_n:=Q_nF\in  \mathcal{B}(\mathcal{H}_0,\mathcal{E}_n),\quad
A_n=2\delta_n I+U_n,$$
gives operators
$X_n:\mathcal E_n\to\HH_0$ and $Y_n:\HH_0\to\mathcal E_n$
such that
\[
A_0X_n-X_nA_n=E_n,\qquad A_nY_n-Y_nA_0=F_n,
\]
and for $n\geq 1$,
\[
\|X_n\|\leq2r_n\|E_n\|\leq 2r_n\|E(I-P_{n-1})\|\leq \frac{c}{2^{n}},
\quad
\|Y_n\|\leq2r_n\|F_n\|\leq\frac{c}{2^{n}}.
\]

Define the row and column operators
\[
X=(X_1\,\,X_2\,\,X_3\,\,\cdots)
:\mathcal H_1\to\mathcal H_0,
\quad
Y=\begin{pmatrix}Y_1\\[1mm]
Y_2\\[1mm]
Y_3\\[1mm]
\vdots\end{pmatrix}
:\mathcal H_0\to\mathcal H_1.
\]
Thus, for $x=(x_n)\in\bigoplus_{n\geq1}\mathcal E_n$
and $h\in\mathcal H_0$,
\[
Xx=\sum_{n=1}^{\infty}X_nx_n,
\qquad
Yh=(Y_nh)_{n\geq1}.
\]
Then they are bounded operators with $\|X\|,\|Y\|\leq c$. The compactness of
 $X$ and $Y$ follows from the fact that
 $$
 \max\{\|X-XP_k\|,\,\|Y-P_kY\|\}\leq \frac{c}{2^k}\to 0.
 $$

Now we sum the finite partial terms of the following equations
$$
A_0X_n-X_nA_n=E_n,\qquad A_nY_n-Y_nA_0=F_n.
$$
Then we get
\[
A_0XP_k-XA_{\mathrm t}P_k=EP_k,\qquad P_kA_{\rm t}Y-P_kYA_0=P_kF.
\]
Since $A_{\mathrm t}$ commutes with $P_k$, and $EP_k\to E$, $P_kF\to F$, taking norm limits gives
\[
A_0X-XA_{\mathrm t}=E,\qquad
A_{\mathrm t}Y-YA_0=F.
\]

Define
\[
A=\begin{pmatrix}A_0&0\\0&A_{\mathrm t}\end{pmatrix},\qquad
B=\begin{pmatrix}B_0&X\\Y&B_{\mathrm t}\end{pmatrix}
\begin{array}{l}\HH_0\\\HH_1\end{array}.
\]
Both operators are compact and
\[
[A,B]
=
\begin{pmatrix}
[A_0,B_0]&A_0X-XA_{\mathrm t}\\
A_{\mathrm t}Y-YA_0&[A_{\mathrm t},B_{\mathrm t}]
\end{pmatrix}
=
\begin{pmatrix}
D&E\\
F&G
\end{pmatrix}.
\]
The preceding estimates yield
$$
\|A\|\leq \max\{\|A_0\|,\|A_{\rm t}\|\}\leq 3\delta_1=6c,
$$
and
$$
\|B\|=\left\|\begin{pmatrix}B_0&X\\Y&B_{\mathrm t}\end{pmatrix}\right\|\leq \left\|\begin{pmatrix}B_0&0\\0&B_{\mathrm t}\end{pmatrix}\right\|
+\left\|\begin{pmatrix}0&X\\Y&0\end{pmatrix}\right\|\leq \frac{4}{3}\delta_1+c\leq 4c.
$$
Then  $\|A\|\|B\|\leq 24c^2$. 
\end{proof}
\section{Main Results}

We first treat compact operators outside the trace class and then
consider the trace-class case. 
We recall Fan's criterion for the existence of a zero-diagonal.
For a detailed proof, see
\cite[Appendix~B.1]{LoreauxThesis}.

\begin{theorem}[{\cite[Theorem~1]{Fan}}]\label{thm:fan}
An operator $T\in\BB(\HH)$ has a zero-diagonal if and only if
there are an orthonormal basis $(e_j)_{j\geq1}$ and a strictly increasing sequence of integers
$(m_k)_{k\geq 1}$ such that
\[
\lim_{k\to\infty}\sum_{j=1}^{m_k}\ip{Te_j}{e_j}=0.
\]
In particular, every $T\in\SSp_1(\HH)$ with $\Tr(T)=0$
has  a zero-diagonal.
\end{theorem}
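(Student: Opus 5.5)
We need to propose a proof of Fan's theorem, which is the last statement: $T$ has a zero-diagonal iff there are an orthonormal basis $(e_j)$ and partial sums $\sum_{j\le m_k}\ip{Te_j}{e_j}\to0$ along a subsequence. The ``in particular'' clause follows at once from the criterion, since for trace-class $T$ with $\Tr(T)=0$ any basis gives partial sums converging to $0$.

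The forward direction is trivial: if $\ip{Te_j}{e_j}=0$ for all $j$, every partial sum vanishes.

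For the converse, I would build the zero-diagonal basis block by block. Set $m_0=0$ and pass to a subsequence so that $s_k=\sum_{j\le m_k}\ip{Te_j}{e_j}$ satisfies $\sum_k|s_k|<\infty$, or at least $s_k\to0$ fast. The key finite-dimensional tool is the following standard fact: on a finite-dimensional space, an operator with trace zero has an orthonormal basis on which its compression has zero diagonal. This is proved by induction on dimension. Trace zero forces $0$ into the numerical range, so there is a unit vector $f$ with $\ip{Tf}{f}=0$. Compressing $T$ to $f^\perp$ preserves trace zero, and we recurse.

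The plan is to correct the small nonzero traces $s_k$ by mixing consecutive blocks. Let $\mathcal F_k=\spanop\{e_{m_{k-1}+1},\dots,e_{m_k}\}$, and let $t_k=s_k-s_{k-1}$ be the trace of the compression of $T$ to $\mathcal F_k$. Choose a unit vector $g_k\in\mathcal F_k$ that will carry the diagonal value $s_k$ forward. Then define finite-dimensional spaces $\mathcal G_k$ inductively as follows. Let $\mathcal G_1\subset\mathcal F_1$ be the orthocomplement of $g_1$, adjusted so that the compression of $T$ to $\mathcal G_1$ has trace $0$. This means picking $g_1$ with $\ip{Tg_1}{g_1}=s_1$. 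Next, $\mathcal G_2$ is the span of $g_1$ together with $\mathcal F_2\ominus g_2$, where $g_2$ is chosen with $\ip{Tg_2}{g_2}=s_2$. Then the trace of the compression to $\mathcal G_2$ is $s_1+(t_2-s_2)=0$, and so on. Each $\mathcal G_k$ has trace zero, so it admits a zero-diagonal orthonormal basis. Since the $\mathcal G_k$ are mutually orthogonal, the union of these bases is a zero-diagonal orthonormal family.

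The main obstacle is twofold. First, we must choose $g_k\in\mathcal F_k$ with $\ip{Tg_k}{g_k}=s_k$ exactly, which requires $s_k$ to lie in the numerical range of the compression to $\mathcal F_k$. Second, the family must be complete. The $g_k$'s leftover vector is absorbed into $\mathcal G_{k+1}$, so every $e_j$ lies in $\bigoplus_k\mathcal G_k$ and completeness holds automatically. For the first issue, I would not insist on exact equality. Instead, enlarge blocks, coarsening the subsequence $(m_k)$ using the freedom in choosing it, so that $\mathcal F_k$ is large. I would also replace the rigid choice by a two-dimensional mixing: pick unit vectors $g_k\in\mathcal F_k$ whose diagonal value $\ip{Tg_k}{g_k}$ is tiny, which is possible when $T$ is compact or after enlarging blocks. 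Then absorb the discrepancy using a rotation in $\spanop\{g_{k-1},g_k\}$, whose numerical range is an ellipse containing values close to the needed average. If $T$ is merely bounded, I expect the cleanest route is instead to follow Fan's original argument. There, one uses that $s_k\to0$ implies $0$ lies in the closure of averages, and the convexity of the numerical range lets one interpolate. I would first try the compact case, which suffices for this paper, with the $g_k$ chosen in tails where $\|T\|$ restricted is small.
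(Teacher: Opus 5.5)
Your forward direction, the deduction of the trace-zero clause, and the finite-dimensional lemma are fine. The paper does not prove this statement; it quotes Fan and refers to Loreaux's thesis. So your converse has to stand on its own, and it has a genuine gap at exactly the step you flag. Your bookkeeping needs every $\mathcal G_k=\C g_{k-1}\oplus(\mathcal F_k\ominus g_k)\subseteq\mathcal F_{k-1}\oplus\mathcal F_k$ to have trace zero. This can be impossible for every choice of $(m_k)$, even in the compact, trace-class, trace-zero case the paper actually needs. Take $T=\operatorname{diag}(d_j)$ with $d_1=1$ and $d_j=-1/(j(j-1))$ for $j\geq2$. The partial sums are $\sum_{j\leq m}d_j=1/m\to0$, but $T$ is strictly negative on $\overline{\spanop}\{e_j:j\geq2\}$. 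For $k\geq3$ your $\mathcal G_k$ is a nonzero subspace of that space, so the compression of $T$ to it has strictly negative trace. In particular, writing $W$ for the numerical range, $W(T_{\mathcal F_k})\subset(-\infty,0)$ never contains $s_k=1/m_k$ for $k\geq2$. Neither coarsening nor rotations inside $\spanop\{g_{k-1},g_k\}$ can help: the only positive mass sits on $e_1$, and a scheme that mixes only adjacent blocks never reaches it. The fallback has a second defect. Producing values ``close to'' the target leaves nonzero diagonal entries, whereas a zero-diagonal needs exact zeros, and you never say where the residual errors go. The bounded case is deferred to Fan altogether.

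What is missing is a carried vector that accumulates. Write $\sigma_m=\sum_{j\leq m}\ip{Te_j}{e_j}$. Choose $g_k$ in $\mathcal V_k=\C g_{k-1}\oplus\mathcal F_k$ rather than in $\mathcal F_k$, with $\ip{Tg_k}{g_k}=\sigma_{m_k}$, and set $\mathcal G_k=\mathcal V_k\ominus g_k$; its trace is $\sigma_{m_{k-1}}+(\sigma_{m_k}-\sigma_{m_{k-1}})-\sigma_{m_k}=0$. The requirement becomes $\sigma_{m_k}\in W(T_{\mathcal V_k})\supseteq\operatorname{conv}(\{\sigma_{m_{k-1}}\}\cup W(T_{\mathcal F_k}))$. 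You meet it by choosing $m_k$ adaptively from the given subsequence, with the obvious changes for $k=1$. Let $K=\operatorname{conv}(\{\sigma_{m_{k-1}}\}\cup W(T_{\mathrm{tail}}))$, where the tail is $\overline{\spanop}\{e_j:j>m_{k-1}\}$. Then $0\in\overline K$, since otherwise the partial sums would stay away from $0$.
\begin{itemize}
\item If $0$ is interior to $K$, a long enough block puts a disc about $0$ inside $W(T_{\mathcal V_k})$, and a later term of the subsequence lands in it.
\item Otherwise a supporting line $\operatorname{Re}(e^{i\phi}z)\geq0$ of $K$ makes $\operatorname{Re}(e^{i\phi}\sigma_m)$ nonnegative and nondecreasing for $m\geq m_{k-1}$, hence identically $0$. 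So the tail compression of $\operatorname{Re}(e^{i\phi}T)$ is a positive operator with zero diagonal, hence $0$, and the same argument runs on a line. If $0$ is an endpoint there, the tail compression of $T$ vanishes and $g_k=g_{k-1}$ works.
\end{itemize}
In the example, $g_k$ simply keeps a component along $e_1$.

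Completeness is then no longer automatic. Let $P_m$ be the projection onto $\spanop\{e_1,\dots,e_m\}$. Then $P_{m_k}$ maps the orthocomplement of the constructed family into $\C g_k$, so that complement is at most one-dimensional. If it is nonzero, it is spanned by a norm limit of unimodular multiples of the $g_k$, whose diagonal value is $\lim_k\sigma_{m_k}=0$, and you can append it to the basis.
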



For every $T\in\KK(\HH)\setminus\SSp_1(\HH)$, Fan and Fong \cite{FF}
proved that $T$ is similar to an operator with zero-diagonal.
We prove the following quantitative version. 

\begin{theorem}\label{thm:bounded-similarity}
Let $T\in\KK(\HH)\setminus\SSp_1(\HH)$ and $0<\theta<1$. There is a bounded
invertible operator $S$ such that
\[
\|S\|,\|S^{-1}\|\leq 1+2\theta
\]
and $S^{-1}TS$ has a zero-diagonal.
\end{theorem}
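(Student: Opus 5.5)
The plan is to build $S=I+N$ with $N^2=0$ and $\|N\|\leq\theta$, so that $S^{-1}=I-N$ and $\|S\|,\|S^{-1}\|\leq1+\theta$. I will then verify Fan's criterion (Theorem~\ref{thm:fan}) for $S^{-1}TS$ in a suitable orthonormal basis. That means producing an orthonormal basis $(e_j)$ and indices $m_k\uparrow\infty$ such that
\[
\Bigl|\sum_{j\leq m_k}\ip{S^{-1}TSe_j}{e_j}\Bigr|\to0 .
\]
The mechanism is the one already used in Lemma~\ref{lem:finite-trace-block}. For orthonormal $a,b$ and $Kz=\ip{z}{a}b$, the nilpotent $K$ shifts the diagonal entry at $a$ by an amount proportional to $\ip{Tb}{a}$. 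Here I use many such rank-one pieces with disjoint supports, each with the small coefficient $\theta$. Because $T\notin\SSp_1(\HH)$, infinitely many of these small shifts add up to an arbitrary bounded correction.

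\textbf{Step 1 (a divergent supply of pairs).} Since $T\notin\SSp_1(\HH)$, for every finite-dimensional $\mathcal M$ the compression of $T$ to $\mathcal M^\perp$ is not trace class. I will show that one can find finitely many orthonormal pairs $(a_i,b_i)$ in $\mathcal M^\perp$, with all $a_i,b_i,Ta_i,T^*a_i,Tb_i,T^*b_i$ in suitable orthocomplements, such that $\sum_i\ip{Tb_i}{a_i}$ approximates any prescribed complex number $w$. The individual terms are small by compactness, so only the total matters.

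To get these pairs I follow the construction in Lemma~\ref{lem:finite-trace-block}. Take vectors $u,v$ from the tail with $\ip{Tu}{u}$ and $\ip{Tv}{v}$ well separated, and set $a=(u+v)/\sqrt2$, $b=(v-u)/\sqrt2$. Then $\ip{Tb}{a}$ is essentially $(\ip{Tv}{v}-\ip{Tu}{u})/2$ plus small off-diagonal terms.

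Non-trace-classness should give, in every cofinite subspace, orthonormal families whose diagonal sums $\sum|\ip{Tu_i}{u_i}|$ are unbounded. After multiplying each $b_i$ by a suitable unimodular scalar, the sum $\sum\ip{Tb_i}{a_i}$ can be steered in any direction. This extraction for a general non-normal $T$ is the step I expect to be the main obstacle. My first attempt is to reduce to whichever of $\operatorname{Re}T$ or $\operatorname{Im}T$ is not trace class. I would then pick approximate eigenvectors of that self-adjoint compact operator, which have unbounded diagonal mass. Finally I would use compactness of $T$ to make all cross terms summably small, as in Lemma~\ref{lem:compression}.

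\textbf{Step 2 (inductive block construction).} I will choose finite orthonormal blocks $F_1,F_2,\dots$ with $\bigoplus F_k=\HH$, enumerating a fixed basis so that each $F_k$ absorbs the next vector. Each $F_k$ consists of the absorbed vector, some $a$'s, some $b$'s, and completing vectors. The partial operators are
\[
N_k=\theta\sum_{i}\omega_i\ip{\cdot}{a_i}b_i ,\qquad |\omega_i|=1,
\]
where the $a$'s and $b$'s of all blocks form a single orthonormal family with the $a$'s orthogonal to the $b$'s. Then $N=\sum_kN_k$ satisfies $N^2=0$ and $\|N\|\leq\theta$.

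At stage $k$, let $s_k$ be the current partial diagonal sum of $S^{-1}TS$ over the basis vectors listed so far. Step 1 supplies pairs, placed in the next block and orthogonal to everything previously used and to its images under $T,T^*$, with $\theta\sum\omega_i\ip{Tb_i}{a_i}\approx-s_k$ up to an error $\leq2^{-k}$. Listing the $a_i$ first and then closing the block makes the partial sum at $m_k$ tend to $0$.

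\textbf{Step 3 (bookkeeping of errors).} The diagonal of $S^{-1}TS=T+TN-NT-NTN$ at a basis vector $e$ differs from $\ip{Te}{e}$ by three terms: $\ip{TNe}{e}$, $-\ip{NTe}{e}$ and $-\ip{NTNe}{e}$. Each is a finite combination of inner products such as $\ip{Tb_i}{e}$, $\ip{Te}{a_i}$ and $\ip{Tb_i}{a_j}$. The orthogonality requirements in Step 1 (new vectors orthogonal to $T$- and $T^*$-images of old ones), together with compactness making late entries tiny, make all such cross contributions summable with total at most $2^{-k}$ beyond stage $k$. What remains are the designed terms $\theta\omega_i\ip{Tb_i}{a_i}$.

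Fan's criterion then yields a zero-diagonal for $S^{-1}TS$, with $\|S\|,\|S^{-1}\|\leq1+\theta\leq1+2\theta$. The slack $2\theta$ can absorb a possible small normalization of $N$.
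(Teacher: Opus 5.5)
\textbf{There is a genuine gap in Steps~2--3.} Your bookkeeping drops the unperturbed diagonal entries $\ip{Ta_i}{a_i}$, and these are not small.

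With $N=\theta\sum_i\omega_i\ip{\cdot}{a_i}b_i$ one has $Nb_i=0$, $N^*a_i=0$, $N^*b_i=\theta\bar\omega_ia_i$, and $Ne=N^*e=0$ for every basis vector $e$ orthogonal to all $a_j,b_j$. So the diagonal of $(I-N)T(I+N)$ is exactly:
\begin{itemize}
\item $\ip{Te}{e}$ at such $e$;
\item $\ip{Ta_i}{a_i}+\theta\omega_i\ip{Tb_i}{a_i}$ at $a_i$;
\item $\ip{Tb_i}{b_i}-\theta\omega_i\ip{Tb_i}{a_i}$ at $b_i$.
\end{itemize}
There are no cross terms at all. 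However, the shifts on a fully listed pair cancel. Hence the partial sum at your cut $m_k$ is $s_k+\sum_i\bigl(\ip{Ta_i}{a_i}+\theta\omega_i\ip{Tb_i}{a_i}\bigr)$, where $s_k=\Tr(PTP)$ is a compressed trace of $T$ itself. Choosing $\theta\sum_i\omega_i\ip{Tb_i}{a_i}\approx-s_k$ leaves $\sum_i\ip{Ta_i}{a_i}$ behind.

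Now take $T\geq0$ not trace class. This is exactly the case where a similarity is unavoidable, since a nonzero positive operator has no zero diagonal.
\begin{itemize}
\item Along your exhausting blocks, $s_k=\Tr(PTP)\to\infty$.
\item Cauchy--Schwarz gives $\operatorname{Re}\bigl(\ip{Ta}{a}+\theta\omega\ip{Tb}{a}\bigr)\geq\ip{Ta}{a}-\theta\ip{Ta}{a}^{1/2}\ip{Tb}{b}^{1/2}$. This is negative only if $\ip{Ta}{a}<\theta^2\ip{Tb}{b}$, and it is never below $-\tfrac{\theta^2}{4}\ip{Tb}{b}$.
\item Your $45^\circ$ pairs $a=(u+v)/\sqrt2$, $b=(v-u)/\sqrt2$ with negligible cross terms have $\ip{Ta}{a}\approx\ip{Tb}{b}$.
\end{itemize}
So every $a_i$-entry has real part $\gtrsim(1-\theta)\ip{Ta_i}{a_i}\geq0$, and no subsequence of partial sums tends to $0$.

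\textbf{The missing idea is a second-order shift carried by vectors whose own diagonal is summable.} A single rank-one nilpotent cannot produce this. If one vector of the pair is a direction $y$ with $Ty,T^*y\approx0$, then both $\ip{Tb}{y}=\ip{b}{T^*y}$ and $\ip{Ty}{a}$ are negligible, so nothing moves.

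The paper proceeds as follows.
\begin{itemize}
\item It first chooses $y_n=h_{j_n}$ with $\|Ty_n\|+\|T^*y_n\|\leq2^{-n}$. This gives $T=(C\oplus0)+K$ on $\mathcal M\oplus\mathcal N$ with $K\in\SSp_1(\HH)$ and $C\notin\SSp_1(\mathcal M)$.
\item It takes an eigenbasis $(x_n)$ of $\operatorname{Re}C$, so that $a_n=\ip{Cx_n}{x_n}$ satisfies $\sum|a_n|=\infty$.
\item On $\spanop\{x_n,y_n\}$ it uses $S_n=\bigl(\begin{smallmatrix}1&0\\-\theta q_n&1\end{smallmatrix}\bigr)\bigl(\begin{smallmatrix}1&\theta\\0&1\end{smallmatrix}\bigr)$. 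This is a product of two unipotents, not of the form $I+N$ with $N^2=0$.
\item This gives $\ip{S^{-1}TSy_n}{y_n}=\theta^2\omega|a_n|+\gamma_n$ with $\sum_n|\gamma_n|\leq\|S^{-1}KS\|_1<\infty$.
\item Listing suitable $y_n$ from $I_{-1},I_{-i}$ (or $I_1,I_i$) ahead of the cut then drives the partial sums to $0$, and Fan's criterion applies.
\end{itemize}

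To repair your own framework, you would have to tilt each pair toward a null direction. For example, take $a=\sin\phi\,x+\cos\phi\,y$ and $b=\cos\phi\,x-\sin\phi\,y$ with $\tan\phi$ of order $\theta$. Step~1 and the steering argument would then have to be redone at scale $\theta^2$.
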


\begin{proof}
Fix an orthonormal basis $(h_j)_{j\geq1}$ of $\HH$.
Since $T$ and $T^*$ are compact,
$\|Th_j\|+\|T^*h_j\|\to0$.
We may choose a sequence of numbers $(j_n)$ satisfying $j_{n+1} > j_n + 1$ such that
$$\Vert{}Th_{j_n}\Vert{} + \Vert{}T^*h_{j_n}\Vert{} \leq \frac{1}{2^n}.$$
Let
$$y_n = h_{j_n},\quad \mathcal N=\overline{\spanop\{y_n:n\geq1\}},\quad \mathcal M=\mathcal N^\perp.$$
By the choice of $j_n$, it follows that $\mathcal M$
and $\mathcal N$ are infinite-dimensional. Let $P$ be the orthogonal projection onto $\mathcal N$. Then the series
\[
TP=\sum_{n=1}^{\infty}\ip{\,\cdot\,}{y_n}Ty_n,\qquad
PT=\sum_{n=1}^{\infty}\ip{\,\cdot\,}{T^*y_n}y_n
\]
converge in trace norm. 
Define
\[
C=(I-P)T|_{\mathcal M},\qquad K=PT+TP-PTP.
\]
Then $K\in\SSp_1(\HH)$ and
$$
T=(C\oplus0)+K
$$
relative to $\HH=\mathcal M\oplus\mathcal N$.
Since $T\notin\SSp_1(\HH)$, it follows that
$C\notin\SSp_1(\mathcal M)$.

Write $C=\operatorname{Re}C+i\operatorname{Im}C$. At least one of its real and imaginary parts
is not trace class. 
By replacing $T$ with $-iT$ if necessary, we can assume that $\operatorname{Re}C$ is not trace class. By the spectral theorem,
there is an orthonormal basis $(x_n)_{n\geq1}$ of
$\mathcal M$ such that
\[
(\operatorname{Re}C)x_n=\lambda_nx_n,
\qquad \lambda_n\in\mathbb R.
\]
Then  $\sum_{n=1}^{\infty}|\lambda_n|=\infty$.
Set $a_n=\ip{Cx_n}{x_n}$, then
$\operatorname{Re}a_n=\lambda_n$, and hence
\[
\sum_{n=1}^{\infty}|a_n|
\geq\sum_{n=1}^{\infty}|\lambda_n|
=\infty.
\]
The compactness of $C$ implies $a_n\to0$.

Now we partition
$\mathbb N$ into four index sets $I_1,I_{-1},I_i,I_{-i}$ such that
\[
\sum_{n\in I_\omega}|a_n|=\infty,
\qquad\omega\in\{1,-1,i,-i\}.
\]
For $n\in I_\omega$, set
\[
q_n=
\begin{cases}
\displaystyle \frac{\omega|a_n|}{a_n},&a_n\ne0,\\[6pt]
0,&a_n=0.
\end{cases}
\]
Then $|q_n|\leq1$ and $q_na_n=\omega|a_n|$.
Let
\[
\mathcal F_n=\operatorname{span}\{x_n,y_n\}.
\]
Since $(x_n)$ and $(y_n)$ are orthonormal bases of
$\mathcal M$ and $\mathcal N$, respectively,
\[
\mathcal H=\bigoplus_{n=1}^{\infty}\mathcal F_n.
\]
Relative to the ordered basis $(x_n,y_n)$ of $\mathcal F_n$,
define
\[
S_n=
\begin{pmatrix}
1&\theta\\
-\theta q_n&1-\theta^2 q_n
\end{pmatrix},
\quad
S_n^{-1}=
\begin{pmatrix}
1-\theta^2 q_n&-\theta\\
\theta q_n&1
\end{pmatrix}.
\]

A straightforward computation shows that
\[
\max\{\|S_n-I\|,\|S_n^{-1}-I\|\}
\leq \theta+\theta^2
\leq 2\theta.
\]
Let
$
S=\bigoplus_{n=1}^{\infty}S_n.
$
Then $S$ is bounded and invertible, with
\[
S^{-1}=\bigoplus_{n=1}^{\infty}S_n^{-1},
\qquad
\|S\|,\ \|S^{-1}\|\leq 1+2\theta.
\]

Recall that $T=(C\oplus0)+K$, where $K$ is trace class.
Set
$$
\widehat T=S^{-1}TS,
\qquad
\gamma_n=\langle S^{-1}KSy_n,y_n\rangle.
$$
Since $S=\bigoplus_n S_n$, the compression of
$S^{-1}(C\oplus0)S$ to $\mathcal F_n$, relative to
$(x_n,y_n)$, is
$$
S_n^{-1}
\begin{pmatrix}
a_n&0\\
0&0
\end{pmatrix}
S_n
=
a_n
\begin{pmatrix}
1-\theta^2q_n&\theta(1-\theta^2q_n)\\
\theta q_n&\theta^2q_n
\end{pmatrix}.
$$
Taking the $(2,2)$-entry and using
$q_na_n=\omega|a_n|$ gives
$$
\langle\widehat Ty_n,y_n\rangle
=\theta^2q_na_n+\gamma_n
=\theta^2\omega|a_n|+\gamma_n,
\qquad n\in I_\omega.
$$
Moreover, $S^{-1}KS$ is trace class, and hence
\[
\sum_{n=1}^{\infty}|\gamma_n|
\leq\|S^{-1}KS\|_1
\leq(1+2\theta)^2\|K\|_1<\infty.
\]
Let
$
e_{2n-1}=x_n,\,\,e_{2n}=y_n.
$
Now we construct finite sets inductively
\[
\varnothing=F_0\subsetneq F_1\subsetneq F_2\subsetneq\cdots
\]
such that
\[
\bigcup_{k=1}^{\infty}F_k=\mathbb N,
\qquad
\left|
\sum_{j\in F_k}\langle\widehat Te_j,e_j\rangle
\right|\leq \frac{3}{2^k}.
\]

Suppose that $F_{k-1}$ has been constructed.
Choose $N_k\geq k$ such that
\[
F_{k-1}\subset\{1,\ldots,2N_k-1\},
\qquad
\sup_{n>N_k}\theta^2|a_n|\leq\frac{1}{2^k},
\qquad
\sum_{n>N_k}|\gamma_n|\leq\frac{1}{2^k}.
\]
Denote
\[
z=\sum_{j=1}^{2N_k}\langle\widehat Te_j,e_j\rangle.
\]
Suppose that $\operatorname{Re}z\geq0$ and
$\operatorname{Im}z\geq0$.
Let $p,q\geq N_k$ be the least integers satisfying
$$
\sum_{\substack{N_k<n\leq p\\n\in I_{-1}}}\theta^2|a_n|
\geq\operatorname{Re}z,
\qquad
\sum_{\substack{N_k<n\leq q\\n\in I_{-i}}}\theta^2|a_n|
\geq\operatorname{Im}z.
$$
These integers exist because both sums diverge.
Set
$$
J=\{n\in I_{-1}:N_k<n\leq p\},
\qquad
L=\{n\in I_{-i}:N_k<n\leq q\}.
$$
Hence
$$
0\leq\theta^2\sum_{n\in J}|a_n|
       -\operatorname{Re}z\leq\frac{1}{2^k},\quad
0\leq\theta^2\sum_{n\in L}|a_n|
       -\operatorname{Im}z\leq\frac{1}{2^k}.
$$

Define
$$
F_k=\{1,\ldots,2N_k\}\cup\{2n:n\in J\cup L\}.
$$
Using
$\langle\widehat Ty_n,y_n\rangle
=\theta^2\omega|a_n|+\gamma_n$,
with $\omega=-1$ on $J$ and $\omega=-i$ on $L$, we obtain
$$
\begin{aligned}
\sum_{j\in F_k}\langle\widehat Te_j,e_j\rangle
&=z+\sum_{n\in J}\langle\widehat Ty_n,y_n\rangle
    +\sum_{n\in L}\langle\widehat Ty_n,y_n\rangle\\
&=z+\sum_{n\in J}(-\theta^2|a_n|+\gamma_n)
    +\sum_{n\in L}(-i\theta^2|a_n|+\gamma_n)\\
&=z-\theta^2\sum_{n\in J}|a_n|
    -i\theta^2\sum_{n\in L}|a_n|
    +\sum_{n\in J\cup L}\gamma_n.
\end{aligned}
$$
Consequently,
$$
\left|\sum_{j\in F_k}\langle\widehat Te_j,e_j\rangle\right|
\leq
\left|\operatorname{Re}z-\theta^2\sum_{n\in J}|a_n|\right|
+\left|\operatorname{Im}z-\theta^2\sum_{n\in L}|a_n|\right|
+\sum_{n\in J\cup L}|\gamma_n|
\leq \frac{3}{2^k}.
$$
The other sign choices are treated similarly, using $I_1$
when $\operatorname{Re}z<0$ and $I_i$ when
$\operatorname{Im}z<0$.
Also,
$$F_{k-1}\subsetneq F_k,
\qquad\{1,\ldots,2k\}\subset F_k.$$
This completes the induction.

Now we order the indices by sequentially listing the elements of $F_1, F_2 \setminus F_1, F_3 \setminus F_2, \ldots$, and let $(f_j)$ be the corresponding orthonormal basis. Set $m_k=|F_k|$,
then
\[
\sum_{j=1}^{m_k}\langle\widehat Tf_j,f_j\rangle
=
\sum_{j\in F_k}\langle\widehat Te_j,e_j\rangle
\longrightarrow0.
\]
Now Theorem~\ref{thm:fan} implies that $\widehat T$
has a zero-diagonal.
\end{proof}

\begin{proposition}\label{prop:beyond-trace-class}
For every $T\in\KK(\HH)\setminus\SSp_1(\HH)$, there are
$A,B\in\KK(\HH)$ such that
\[
T=[A,B],\qquad \|A\|\,\|B\|\leq 2^{5}\kappa\|T\|.
\]
\end{proposition}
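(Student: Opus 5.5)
The plan is to combine Theorem~\ref{thm:bounded-similarity} with Proposition~\ref{prop:zero-diagonal}, and then transport the commutator back through the similarity. The constant $2^5$ should come from the norms of $S$ and $S^{-1}$ absorbed into $\|T\|$ and into the factors.

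First fix $\theta\in(0,1)$, say $\theta=1/2$, so that $1+2\theta=2$. Theorem~\ref{thm:bounded-similarity} gives an invertible $S$ with $\|S\|,\|S^{-1}\|\leq 2$ such that $\widehat T=S^{-1}TS$ has a zero-diagonal. Since $\widehat T$ is compact, Proposition~\ref{prop:zero-diagonal} yields $\widehat A,\widehat B\in\KK(\HH)$ with
\[
\widehat T=[\widehat A,\widehat B],\qquad \|\widehat A\|\,\|\widehat B\|\leq 4\kappa\|\widehat T\|\leq 4\kappa\|S^{-1}\|\,\|T\|\,\|S\|\leq 16\kappa\|T\|.
\]

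Next set $A=S\widehat AS^{-1}$ and $B=S\widehat BS^{-1}$. Both are compact because $\KK(\HH)$ is an ideal. Conjugation is an algebra homomorphism, so
\[
[A,B]=S[\widehat A,\widehat B]S^{-1}=S\widehat TS^{-1}=T.
\]
The norms satisfy
\[
\|A\|\,\|B\|\leq \|S\|^2\|S^{-1}\|^2\|\widehat A\|\,\|\widehat B\|\leq 16\cdot 16\kappa\|T\|=2^8\kappa\|T\|.
\]
This is weaker than the stated $2^5$, so the constants need tightening.

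To reach $2^5$, take $\theta$ small instead. With $(1+2\theta)^2\leq\sqrt2$, the factor $\|S\|\|S^{-1}\|$ is at most $\sqrt2$, so $\|\widehat T\|\leq\sqrt2\|T\|$. The product $\|S\|^2\|S^{-1}\|^2$ is at most $2$, which gives
\[
\|A\|\,\|B\|\leq 2\cdot 4\kappa\sqrt2\,\|T\|<2^4\kappa\|T\|\leq2^5\kappa\|T\|.
\]
Since $\theta\in(0,1)$ is arbitrary in Theorem~\ref{thm:bounded-similarity}, this choice is allowed. For instance, $\theta=0.04$ gives $(1.08)^2<1.17<\sqrt2$.

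There is no substantial obstacle: the zero-diagonal property is used only through Proposition~\ref{prop:zero-diagonal}. The only points to check are that conjugation preserves compactness and commutators, and the bookkeeping of the similarity constants. If equal norm bounds on $A$ and $B$ are wanted later, rescale $A\mapsto tA$ and $B\mapsto t^{-1}B$; this leaves the product unchanged.
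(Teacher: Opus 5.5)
Your proposal is correct and follows essentially the same route as the paper. You apply Theorem~\ref{thm:bounded-similarity}, then Proposition~\ref{prop:zero-diagonal} to $\widehat T=S^{-1}TS$, and conjugate back, which gives $\|A\|\,\|B\|\leq 4\kappa(\|S\|\,\|S^{-1}\|)^3\|T\|$. The only difference is the choice of $\theta$: the paper takes $\theta=1/6$, so $\|S\|\,\|S^{-1}\|\leq 16/9\leq 2$ and the bound is $4\cdot 4\cdot 2=2^5$, while your $\theta=0.04$ gives an even smaller constant (your first attempt with $\theta=1/2$ was indeed too crude, as you noticed).
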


\begin{proof}
Let $\theta=1/6$, choose $S$ as in Theorem~\ref{thm:bounded-similarity}, then $\|S\|\|S^{-1}\|\leq 2$.  Set
$\widehat T=S^{-1}TS$.
Proposition~\ref{prop:zero-diagonal} gives compact operators
$\widehat A,\widehat B$ with
\[
\widehat T=[\widehat A,\widehat B],\qquad
\|\widehat A\|\,\|\widehat B\|
\leq4\kappa\|\widehat T\|.
\]
Set $A=S\widehat AS^{-1}$ and $B=S\widehat BS^{-1}$.
These operators are compact, and
$[A,B]=S[\widehat A,\widehat B]S^{-1}=T$.
Furthermore,
\[
\|A\|\,\|B\|
\leq 4\|\widehat A\|\,\|\widehat B\|
\leq 16\kappa\|\widehat T\|
\leq 2^{5}\kappa\|T\|.
\]
\end{proof}
We now turn to the trace-class case. Anderson's rank-one
construction provides the finite-rank factors needed for the
first diagonal block in Lemma~\ref{lem:finite-trace-block}.

\begin{lemma}\label{lem:finite-rank}
Let $T\in\KK(\HH)$ have finite rank, then there exist compact operators  $A_0,B_0$  such that
$$
T=[A_0,B_0],\qquad \max\{\|A_0\|,\|B_0\|\}\leq 2\|T\|^{1/2}.
$$
\end{lemma}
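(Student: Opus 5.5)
The plan is to reduce to Anderson's rank-one construction \cite{Anderson} and arrange it so that all norms can be tracked explicitly. Let $\mathcal M=\ran T+\ran T^*$, which is finite-dimensional, say $\dim\mathcal M=d$. Then $T=T_0\oplus 0$ relative to $\HH=\mathcal M\oplus\mathcal M^\perp$. Since $\mathcal M^\perp$ is infinite-dimensional, I would identify $\HH$ with a block sum $\bigoplus_{n\geq1}\mathcal E_n$. Here $\mathcal E_1=\mathcal M$ and the $\mathcal E_n$ are finite-dimensional with rapidly growing dimensions $k_n$. The factors are sought in block shift form. $A$ has only subdiagonal blocks $A_n:\mathcal E_n\to\mathcal E_{n+1}$, and $B$ has only superdiagonal blocks $B_n:\mathcal E_{n+1}\to\mathcal E_n$. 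Then $[A,B]$ is block diagonal, with first block $-B_1A_1$ and $n$-th block $A_{n-1}B_{n-1}-B_nA_n$ for $n\geq2$. So it suffices to find matrices with
\[
B_1A_1=-T_0,\qquad B_nA_n=A_{n-1}B_{n-1}\ (n\geq2),\qquad \|A_n\|,\|B_n\|\to0 ,
\]
together with the uniform bound $\sup_n\max\{\|A_n\|,\|B_n\|\}\leq 2\|T\|^{1/2}$. Compactness of $A$ and $B$ is then automatic, since they are norm limits of their finite truncations and the block norms tend to zero. The norm of a block shift is the supremum of its block norms.

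The key steps, in order, are as follows. (i) Factor $T_0$ through the polar or singular value decomposition, $T_0=-B_1A_1$ with $\|A_1\|,\|B_1\|\leq\|T\|^{1/2}$. (ii) Choose the next factorizations so that the \emph{reversed} product $M_{n+1}:=A_nB_n$ has norm smaller than $\|M_n\|$ by a fixed factor. This means the nonzero spectrum is kept but the matrix is ``spread out'' over a larger space. This is exactly Anderson's mechanism: $A_nB_n$ and $B_nA_n$ share nonzero eigenvalues but not norms. Concretely, $M_{n+1}$ should have a nilpotent or Jordan-like structure in which the eigenvalue data of $T_0$ is carried by entries of size $\|T_0\|/m_n$ along long chains. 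This requires $k_{n+1}\approx m_nk_n$. (iii) Verify inductively that each $M_n$ can be factored as $B_nA_n$ with both factors of norm at most $2\|M_n\|^{1/2}$ in this structured form. The constant $2$ should arise from summing a geometric series of corrections, as in the estimates for $\delta_n$ in Proposition~\ref{prop:zero-diagonal}.

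I expect step (ii) to be the main obstacle. The naive spreading $A_n x=(x,\dots,x)/\sqrt m$ yields $A_nB_n=T_0\otimes(J/m)$, whose norm does not decrease. The decay must instead come from off-diagonal, shift-like placement within the chains. My first attempt is to reduce to the rank-one case by writing $T_0$ as a sum of at most $d$ rank-one operators acting on mutually orthogonal reducing copies of Anderson's construction. Each summand $\lambda\, u\otimes v^*$ would be handled by Anderson's explicit weighted-shift factors, rescaled by $|\lambda|^{1/2}$. Because these act on orthogonal summands of $\mathcal M^\perp$, the norms combine as a maximum rather than a sum. If the rank-one pieces cannot be made to act on mutually reducing subspaces, I would fall back to a direct matrix version of Anderson's chain built on the Jordan form of $T_0$.
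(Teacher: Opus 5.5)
\textbf{There is a genuine gap: your block-shift ansatz cannot work whenever $T$ is not nilpotent.}

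Your chain conditions are $B_1A_1=-T_0$ and $B_nA_n=A_{n-1}B_{n-1}$. Since $XY$ and $YX$ always have the same nonzero spectrum, induction gives $\sigma(B_nA_n)\setminus\{0\}=\sigma(-T_0)\setminus\{0\}$ for every $n$. Writing $\rho$ for the spectral radius, this forces $\|A_n\|\,\|B_n\|\geq\|B_nA_n\|\geq\rho(T_0)$ for all $n$. But a block shift with finite-dimensional blocks is compact only if its block norms tend to $0$. So step (ii) is impossible as soon as $T_0$ has a nonzero eigenvalue, in particular already for $T=P_h$.

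Your own remark that $A_nB_n$ and $B_nA_n$ ``share nonzero eigenvalues but not norms'' is exactly the obstruction. Spreading can lower the norm, but never below the invariant spectral radius. Consequently this is not Anderson's mechanism: by the same argument, his factors cannot be a sub/superdiagonal pair. The constant $2$ also does not arise from a geometric series of corrections. For nilpotent $T_0$ the ansatz is not excluded, but you construct nothing there either.

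\textbf{The fallbacks do not close the gap.}
\begin{itemize}
\item A general finite-rank $T_0$ is not an orthogonal direct sum of rank-one operators. In an SVD $T_0=\sum_i s_iu_i\otimes v_i^*$ one has $u_i\not\perp v_j$ in general. So representations of the pieces cannot live on mutually orthogonal reducing subspaces, and the cross terms $[A_i,B_j]$ do not cancel.
\item Passing to Jordan form uses a similarity whose condition number depends on $T$. That destroys the uniform bound $2\|T\|^{1/2}$.
\end{itemize}

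\textbf{The missing idea is a tensor-product reduction to the single rank-one projection.} This avoids decomposing $T_0$ at all.
\begin{itemize}
\item Take Anderson's compact pair $C_0,Z_0$ on $\ell^2(\N)$ with $[C_0,Z_0]=P_h$ and $\max\{\|C_0\|,\|Z_0\|\}\leq2$.
\item Let $\mathcal E=\ran T+\ran T^*$ and $T_{\mathcal E}=T|_{\mathcal E}$.
\item On $\mathcal E\otimes\ell^2(\N)$ set $C=\|T\|^{-1/2}T_{\mathcal E}\otimes C_0$ and $Z=\|T\|^{1/2}I_{\mathcal E}\otimes Z_0$.
\end{itemize}
Then $[C,Z]=T_{\mathcal E}\otimes[C_0,Z_0]=T_{\mathcal E}\otimes P_h$. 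This is unitarily equivalent to $T=T_{\mathcal E}\oplus0$, because both complements $\mathcal E^\perp$ and $\mathcal E\otimes h^\perp$ are infinite-dimensional. The operators $C,Z$ are compact because $\dim\mathcal E<\infty$. Since $\|X\otimes Y\|=\|X\|\,\|Y\|$ and $\|T_{\mathcal E}\|=\|T\|$, we get $\|C\|,\|Z\|\leq2\|T\|^{1/2}$. The constant $2$ is simply inherited from Anderson's pair.
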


\begin{proof}
Assume $T\neq 0$. Let $h$ be a fixed unit vector. By
Anderson's construction \cite{Anderson}, which is reproduced in
\cite[proof of Theorem~3.1]{BPW}, there are  compact operators
$C_0,Z_0$ on $\ell^2(\mathbb N)$ such that
\[
[C_0,Z_0]=P_h,
\qquad
\max\{\|C_0\|,\|Z_0\|\}\leq2.
\]

Set
$$
\mathcal E=\operatorname{ran}T+\operatorname{ran}T^*,
\qquad
T_{\mathcal E}=T|_{\mathcal E}.
$$
Then $\mathcal E$ is finite dimensional and reduces $T$,
with $T=T_{\mathcal E}\oplus0$ on
$\mathcal E\oplus\mathcal E^\perp$.

On $\mathcal E\otimes\ell^2(\N)$, define
$$
C=\|T\|^{-1/2}T_{\mathcal E}\otimes C_0,
\qquad
Z=\|T\|^{1/2}I_{\mathcal E}\otimes Z_0.
$$
Since $\mathcal E$ is finite-dimensional, $C,Z$ are compact and
\[
[C,Z]
=T_{\mathcal E}\otimes[C_0,Z_0]
=T_{\mathcal E}\otimes P_h,\qquad
\|C\|,
\|Z\|\leq 2 \|T\|^{1/2}.
\]
The operator $T_{\mathcal E}\otimes P_h$ is unitarily
equivalent to $T$. Let $U$ be such a unitary. 
Set
\[
A_0=UCU^*,\qquad B_0=UZU^*.
\]
Thus
\[
[A_0,B_0]=U[C,Z]U^*=T.
\]
Moreover,
$$
\|A_0\|,\|B_0\|\leq \max\{\|C_0\|,\|Z_0\|\}\cdot\|T\|^{1/2}\leq 2\|T\|^{1/2}.
$$
\end{proof}

\begin{proposition}\label{prop:trace-class}
For every $T\in\SSp_1(\HH)$, there are $A,B\in\KK(\HH)$
such that
\[
T=[A,B],\qquad \|A\|\,\|B\|\leq 2^{11}\kappa\|T\|.
\]
\end{proposition}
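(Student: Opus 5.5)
We split according to the rank of $T$. If $T=0$ there is nothing to prove. If $T$ has finite rank, Lemma~\ref{lem:finite-rank} gives compact $A,B$ with $[A,B]=T$ and $\|A\|\,\|B\|\leq 4\|T\|\leq 2^{11}\kappa\|T\|$. So the substantive case is $T\in\SSp_1(\HH)$ of infinite rank. There we conjugate by the similarity of Lemma~\ref{lem:finite-trace-block}, treat the two diagonal blocks separately, and join them with Lemma~\ref{lem:joining}.

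First, Lemma~\ref{lem:finite-trace-block} gives an invertible $S$ with $\|S\|,\|S^{-1}\|\leq2$. It also gives
\[
\widehat T=S^{-1}TS=\begin{pmatrix}D&E\\F&G\end{pmatrix}
\]
on $\HH_0\oplus\HH_1$, where $D$ has finite rank, $E,F$ are compact, and $G\in\SSp_1(\HH_1)$ has $\Tr(G)=0$. We have $\|\widehat T\|\leq4\|T\|$, and each block has norm at most $\|\widehat T\|$.

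Second, we produce the two inputs Lemma~\ref{lem:joining} needs. By Theorem~\ref{thm:fan}, $G$ has a zero-diagonal in some orthonormal basis of $\HH_1$. Since $D$ is a finite-rank operator on the separable infinite-dimensional space $\HH_0$, Lemma~\ref{lem:finite-rank} (applied in $\HH_0$) gives compact $A_0,B_0\in\KK(\HH_0)$ with
\[
D=[A_0,B_0],\qquad \max\{\|A_0\|,\|B_0\|\}\leq2\|D\|^{1/2}\leq2\|\widehat T\|^{1/2}.
\]
Since $\kappa\geq4$, this gives $\|A_0\|^2,\|B_0\|^2\leq4\|\widehat T\|\leq\kappa\|\widehat T\|$.

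Third, Lemma~\ref{lem:joining} yields compact $\widehat A,\widehat B$ with $[\widehat A,\widehat B]=\widehat T$ and
\[
\|\widehat A\|\,\|\widehat B\|\leq24\kappa\|\widehat T\|\leq 96\kappa\|T\|.
\]
We then set $A=S\widehat AS^{-1}$ and $B=S\widehat BS^{-1}$. These are compact, and $[A,B]=S[\widehat A,\widehat B]S^{-1}=T$. The norm bound is
\[
\|A\|\,\|B\|\leq\|S\|^2\|S^{-1}\|^2\,\|\widehat A\|\,\|\widehat B\|\leq16\cdot96\,\kappa\|T\|=1536\,\kappa\|T\|\leq2^{11}\kappa\|T\|.
\]

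Once the earlier lemmas are in hand, the argument is essentially bookkeeping. The main point to check is that the hypotheses of Lemma~\ref{lem:joining} are met exactly. The lemma needs $D$ to be a commutator of compact operators on $\HH_0$ itself, not on an auxiliary space; Lemma~\ref{lem:finite-rank} provides this because $\HH_0$ is infinite-dimensional. It also needs $G$ to have a zero-diagonal, which holds because $G$ is trace class with trace zero.

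The other step needing care is the constant. The factor $\|S\|^2\|S^{-1}\|^2=16$ must be absorbed, and $\|\widehat T\|$ must be bounded by $4\|T\|$ rather than by something larger. The hypothesis $\kappa\geq4$ is what lets the maximum in Lemma~\ref{lem:joining} collapse to $\kappa\|\widehat T\|$.
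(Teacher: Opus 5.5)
Your proposal is correct and follows the paper's proof. The chain is the same: Lemma~\ref{lem:finite-trace-block}, then Theorem~\ref{thm:fan} for $G$ and Lemma~\ref{lem:finite-rank} for $D$, then Lemma~\ref{lem:joining} using $\kappa\geq4$, and finally conjugation by $S$, giving the same constant $16\cdot24\cdot4\leq2^{11}$. The only difference is that you do not split off the infinite-rank case $\Tr(T)=0$, which the paper treats directly via Theorem~\ref{thm:fan} and Proposition~\ref{prop:zero-diagonal}; this is harmless, since Lemma~\ref{lem:finite-trace-block} does not assume $\Tr(T)\neq0$.
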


\begin{proof}
If $T$ has finite rank, apply Lemma~\ref{lem:finite-rank}, there exist $A, B$ such that
$$
\|A\|\|B\|\leq4\|T\|.
$$
If $\Tr(T)=0$, Theorem~\ref{thm:fan} gives a zero diagonal,
and Proposition~\ref{prop:zero-diagonal} applies, then
\[
\|A\|\,\|B\|\leq4\kappa\|T\|.
\]
Suppose that $T$ has infinite rank and $\Tr(T)\ne0$.
By Lemma~\ref{lem:finite-trace-block}, there are a bounded
invertible $S$ and an orthogonal decomposition into
infinite-dimensional subspaces such that
\[
\|S\|,\|S^{-1}\|\leq 2,\qquad
\widehat T=S^{-1}TS
=\begin{pmatrix}D&E\\F&G\end{pmatrix}
\begin{array}{l}\HH_0\\\HH_1\end{array},
\]
where $D$ has finite rank and $G$ is trace class with
$\Tr(G)=0$.
Theorem~\ref{thm:fan} therefore gives an orthonormal basis of
$\HH_1$ in which $G$ has zero diagonal.
Applying Lemma~\ref{lem:finite-rank} on $\HH_0$ gives
\[
D=[A_0,B_0],\qquad
\max\{\|A_0\|,\|B_0\|\}
\leq 2\|D\|^{1/2}\leq 2\|\widehat T\|^{1/2}.
\]
By Lemma~\ref{lem:joining}, there exist compact operators $ \widehat A,\widehat B $ such that $[\widehat A,\widehat B]=\widehat T$ and
\[
\|\widehat A\|\|\widehat B\|
\leq 24\max\{\|A_0\|^2,\|B_0\|^2, \kappa\|\widehat T\|\}
\leq 24\kappa\|\widehat T\|.
\]
Here we use $\kappa\geq 4$. Finally, set $A=S\widehat AS^{-1}$ and $B=S\widehat BS^{-1}$.
Then $A,B$ are compact and $[A,B]=T$.
Then we obtain
\[
\|A\|\,\|B\|
\leq4^2\|\widehat A\|\,\|\widehat B\|
\leq 4^2\cdot 24\kappa\|\widehat T\|
\leq 2^{11}\kappa\|T\|.
\]
\end{proof}

\begin{proof}[Proof of Theorem~\ref{thm:main}]
Propositions~\ref{prop:beyond-trace-class} and
\ref{prop:trace-class} cover all compact operators. Then for any $T\in \mathcal{K}(\mathcal{H})$, there exist $A,B\in\KK(\HH)$ such that $T=[A,B]$ and
$$
\|A\|\|B\|\leq  2^{11}\kappa\|T\|.
$$
By replacing $A$ and $B$ with $\lambda A$ and $\lambda^{-1} B$ for a suitable scalar $\lambda$,  we can make their norms equal. This preserves the commutator. Hence, we can choose $A,B$ such that
\[
\max\{\|A\|,\|B\|\}\leq 2^{5}\cdot\sqrt{2}\sqrt{\kappa\|T\|}\leq 50\sqrt{\kappa\|T\|}.
\]
\end{proof}

\section*{Acknowledgement}
The author would like to thank Yuanhang Zhang  for helpful conversations and suggestions.
\section*{AI statement}
The author used GPT 6 astra during the development of this work for searching necessary references, examining mathematical arguments, and language editing.  The author has checked and substantially revised all mathematical arguments and takes full responsibility for the final version.

\end{document}